 \theoremstyle{plain}
\newtheorem{thm}{Theorem}[section]
  \theoremstyle{plain}
  \newtheorem{prop}[thm]{Proposition}
  \theoremstyle{plain}
  \newtheorem{lem}[thm]{Lemma}
 \theoremstyle{definition}
 \newtheorem*{defn*}{Definition}
  \theoremstyle{plain}
  \newtheorem{cor}[thm]{Corollary}
\newcommand{\e}{\mathrm e}
\DeclareMathOperator{\Int}{Int}
\renewcommand{\hat}{\widehat}
\renewcommand{\phi}{\varphi}
\renewcommand{\tilde}{\widetilde}
\def\diam {\mathop {\hbox{\rm diam}}}
\def\supp {\mathop {\hbox{\rm supp}}}
\def\R{\mathbb{R}}
\def\N{\mathbb{N}}
\begin{document}

\title{Hölder-differentiability of Gibbs distribution functions }

\date{November 28, 2007}

\author{Marc Kesseböhmer and Bernd O. Stratmann}

\subjclass{37A50, 37C45, 28A80 }

\keywords{Iterated function systems, thermodynamic formalism, multifractal
formalism, Lyapunov spectra, Gibbs measures, devil's staircases, non-Hölder-differentiability,
distribution functions}

\begin{abstract}
In this paper we give non-trivial applications of the thermodynamic
formalism to the theory of distribution functions of Gibbs measures
(devil's staircases) supported on limit sets of finitely generated
conformal iterated function systems in $\R$. For a large class of
these Gibbs states we determine the Hausdorff dimension of the set
of points at which the distribution function of these measures is
not $\alpha$-Hölder-differentiable. The obtained results give significant
extensions of recent work by Darst, Dekking, Falconer, Li, Morris,
and Xiao. In particular, our results clearly show that the results
of these authors have their natural home within thermodynamic formalism. 
\end{abstract}

\address{Fachbereich 3 - Mathematik und Informatik, Universität Bremen, D--28359
Bremen, Germany}

\email{mhk@math.uni-bremen.de}

\address{Mathematical Institute, University of St Andrews, St Andrews KY16
9SS, Scotland}

\email{bos@maths.st-and.ac.uk}

\maketitle

\section{Introduction}

In this paper we study the limit set $\mathcal{L}$ of an iterated
function system generated by a finite set of conformal contractions
$\{f_{a}:a\in A\}$ in $\R$ satisfying the strong separation condition.
It is well known that each suitably chosen potential function $\psi$
on $\mathcal{L}$ gives rise to a Gibbs measure $\nu_{\psi}$ supported
on $\mathcal{L}$. For instance, for the geometric potential $\phi:=\log f_{a}'$
and with $\delta$ referring to the Hausdorff dimension of $\mathcal{L}$,
we have that the Gibbs measure $\nu_{\delta\phi}$ is in the same
measure class as the $\delta$-dimensional Hausdorff measure on $\mathcal{L}$.
In this paper we concentrate on Gibbs measures $\nu_{\psi}$ associated
with Hölder-continuous potential functions $\psi$ for which $P\left(\psi\right)=0$
and $\alpha\phi<\psi<0$, for some fixed $\alpha\in\R_{+}$. Here,
$P$ refers to the usual pressure function associated with $\mathcal{L}$
(see Section $2$ for the definition). For potentials of this type,
we consider the set $\Lambda_{\psi}^{\alpha}$ of points at which
the $\alpha$-Hölder derivative of the distribution function $F_{\psi}$
of $\nu_{\psi}$ does not exist in the generalized sense (note, $F_{\psi}$
is an `ordinary devil's staircase'). That is, for $\alpha\in\R_{+}$
we consider the set \[
\Lambda_{\psi}^{\alpha}:=\{\xi\in\mathcal{L}:\left(D^{\alpha}F_{\psi}\right)(\xi)\;\mbox{does \, neither \, exist \, nor \, is \, equal \, to \, infinity}\},\]
 where $D^{\alpha}$ refers to the $\alpha$-Hölder derivative defined
for functions $F$ on $\mathcal{L}$ by (given that the limit exists)
\[
\left(D^{\alpha}F\right)(\xi):=\lim_{\eta\to\xi}\frac{\left|F\left(\xi\right)-F\left(\eta\right)\right|}{\left|\xi-\eta\right|^{\alpha}},\quad\hbox{for}\,\,\xi\in\R.\]
 We show that the Hausdorff dimension $\dim_{H}(\Lambda_{\psi}^{\alpha})$
of $\Lambda_{\psi}^{\alpha}$ can be determined by employing the thermodynamic
formalism. The main results of the paper are summarized in the following
theorem.

\vspace{2mm}

\noindent \textbf{Main Theorem.} \emph{ Let $\mathcal{L}$ and $\psi$
be given as above. Then the Hausdorff dimension of $\Lambda_{\psi}^{\alpha}$
is given by \begin{equation}
\dim_{H}(\Lambda_{\psi}^{\alpha})=s,\label{main}\end{equation}
 where $s$ is the unique solution of the equation \begin{equation}
\beta_{\alpha}(s)+s\cdot\min\{\phi(\underline{i})/\psi(\underline{i}):i\in\{0,1\}\}=0.\label{eq:linRel}\end{equation}
 Here, $\beta_{\alpha}$ is determined implicitly by the pressure
equation \begin{equation}
P((t-\alpha\beta_{\alpha}(t))\phi+\beta_{\alpha}(t)\psi)=0\,\,\hbox{for}\,\, t\in\R,\label{pressure}\end{equation}
 and the symbol $i=0$ ($1$ resp.) refers to the letter in the alphabet
used to code the utter left (right resp.) interval in the geometric
representation of the iterated function system, and $\underline{i}$
denotes the infinite word which exclusively contains the letter $i$.}

\vspace{2mm}

\noindent \textbf{Remarks.}

\noindent \textbf{I.} Let us remark that our Main Theorem generalizes
recent work in \cite{Darst:95,DekkingWenxia:03,Falconer:04,LiXiaoDekking:02,Li:07,Mo:02}.
In comparision to the approaches of these authors, with the slight
exception of \cite{Falconer:04} who at least employed multifractal
analysis in his study of the Ahlfors regular case, in this paper we
develop a completely different and much more general approach which
gives these results their natural home within the conceptionally wider
frame of the thermodynamical formalism. In fact, we combine certain
techniques from this formalism (see Section 2 for the details) with
certain other techniques which have their origins in metric Diophantine
analysis. By the latter we mean those techniques which were derived
through generalizations of results by Jarn\'{\i}k \cite{Jarnik:29}
and Besicovitch \cite{Besicovitch:34} on well-approximable irrational
numbers to cuspital excursions on hyperbolic manifolds (see e.g. \cite{S:95,HillVelani:98,S:99}),
and to Julia sets of parabolic rational maps (see e.g. \cite{SU:02}).

\vspace{2mm}

\noindent \textbf{II.} Let us also remark that the results in this
paper can be expressed in terms of so called `or\-dinary de\-vil's
staircases' as follows. For this recall that the distribution function
of a non-atomic positive finite Borel measure $\mu$ on a compact
interval in $\R$ is a non-increasing continuous function which is
constant on the complement of $\supp(\mu)$, the support of $\mu$.
Such a distribution function is called an {\em ordinary devil's
staircase} if the $1$-dimensional Lebesgue measure $\lambda(\supp(\mu))$
of $\supp(\mu)$ vanishes. Obviously, the distribution functions $F_{\psi}$
which we consider in this paper \textsl{are} ordinary devil's staircases.
Interesting sets for devil's staircases are the set $\Delta_{0}$
of points where the staircase has derivative equal to zero, the set
$\Delta_{\infty}$ where the derivative is equal to infinity, and
the set $\Delta_{\sim}$ where the derivative does not exist. Clearly,
for the type of staircases in this paper we trivially have that $\lambda$-almost
every point is in $\Delta_{0}$, and hence $\dim_{H}(\Delta_{0})=1$
(in fact, this also holds for the slippery devil's staircases below
(see e.g. \cite[section 31]{Bil:79})). Also, combining our Main Theorem
and Corollary \ref{cor1} in this paper, we (almost) immediately have
$\dim_{H}(\Delta_{\infty})=\dim_{H}(\mathcal{L})$. Therefore, for
the type of ordinary devil's staircases in this paper we have \begin{equation}
\dim_{H}(\Delta_{\sim})<\dim_{H}(\Delta_{\infty})<\dim_{H}(\Delta_{0})=1.\label{devil}\end{equation}
 Here the question arises of how the Hausdorff dimension $\dim_{H}(\nu_{\psi})$
of the measure $\nu_{\psi}$ fits into this picture. In fact, for
the Darst self-similar case (see Remark III. below) we found numerically
that there are cases in which $\dim_{H}(\nu_{\psi})>\dim_{H}(\Delta_{\sim})$
(cf. Fig.~1) as well as cases where $\dim_{H}(\nu_{\psi})<\dim_{H}(\Delta_{\sim})$
(cf. Fig.~2) . Therefore, there is no hope to include $\dim_{H}(\nu_{\psi})$
into the hirachy of dimensions in (\ref{devil}) in general.\\
 Note that these results are in slight contrast to our results for
a certain slippery devil's staircase in \cite{KessebStratmann:07}.
For a \emph{slippery devil's staircase} we have that although the
underlying measure is still singular with respect to $\lambda$, the
support of the measure is equal to an interval. As was shown in \cite{KessebStratmann:07},
the measure of maximal entropy $m_{U}$ for the Farey map $U$ has
a distribution function which is a slippery devil's staircase. In
fact, this distribution function is equal to Minkowski's Question
Mark Function. More precisely, the main results in \cite{KessebStratmann:07}
for this particular slippery devil's staircase are, and the reader
is asked to compare these with the outcome for ordinary devil's staircases
in (\ref{devil}), \[
\dim_{H}(m_{U})<\dim_{H}(\Lambda_{\sim})=\dim_{H}\left(\Lambda_{\infty}\right)<\dim_{H}\left(\Lambda_{0}\right)=1.\]

\vspace{2mm}

\noindent \textbf{III.} Let us end this introduction with a brief
discussion of two special cases of our Main Theorem, namely the one
in which $\nu_{\psi}$ is an Ahlfors regular measure and the one in
which $\nu_{\psi}$ is a self-similar measure of the type considered
by Darst.

\subsection*{\emph{The Ahlfors regular case.}}

Recall that a measure $\mu$ is called $t$-Ahlfors regular if and
only if $\mu(B(\xi,r))\asymp r^{t}$, for all $0<r<r_{0}$ and for
all $\xi$ in the support of $\mu$, for some fixed $r_{0},t>0$.
In the situation of the Main Theorem, it is well-known that if the
Gibbs measure $\nu_{\psi}$ is $t$-Ahlfors regular, then $t$ is
equal to the Hausdorff dimension $\delta$ of $\mathcal{L}$ and $\psi=\delta\phi$.
In this case, the Hausdorff dimension $s$ of $\Lambda_{\psi}^{\alpha}$
can be calculated explicitly for $\alpha>\delta$ as follows. Namely,
here (\ref{pressure}) implies $P\left(\left(s+\left(\delta-\alpha\right)\beta_{\alpha}\left(s\right)\right)\phi\right)=0$,
which immediately gives $s+\left(\delta-\alpha\right)\beta_{\alpha}\left(s\right)=\delta$,
and hence,%
\begin{figure}
\includegraphics[clip,width=.95\textwidth]{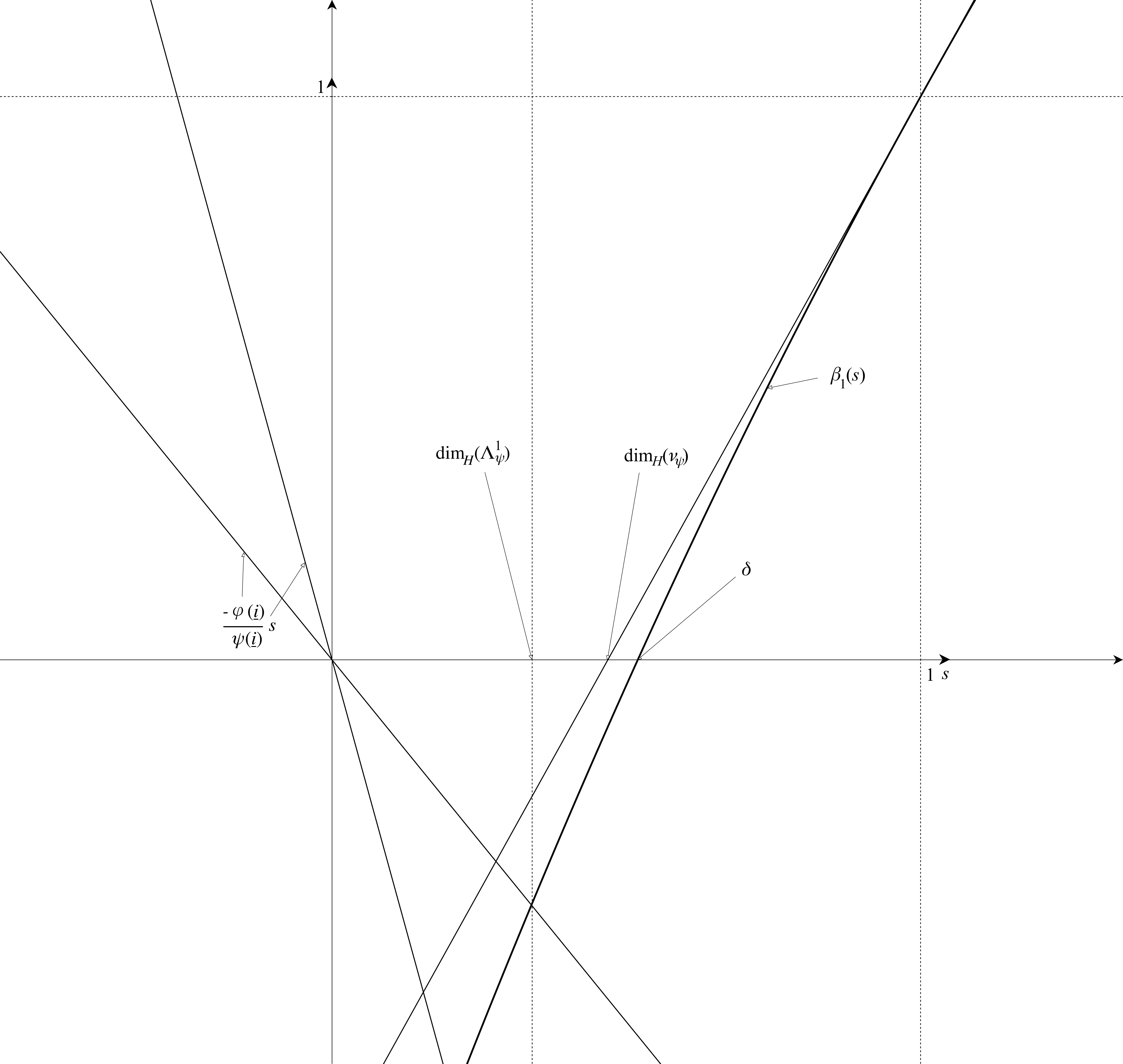}

\caption{\label{fig:Darst1}The graph of $\beta_{\alpha}$ for the Darst case
with $a_{1}=0.1$ and $a_{2}=0.5$ and $\alpha=1$. In this case $\dim_{H}\left(\nu_{\psi}\right)>\dim_{H}\left(\Lambda_{\psi}^{1}\right)$
(cf. Main Theorem and Proposition \ref{pro:tangent}).}

\end{figure}

\begin{figure}
\includegraphics[width=0.9\columnwidth]{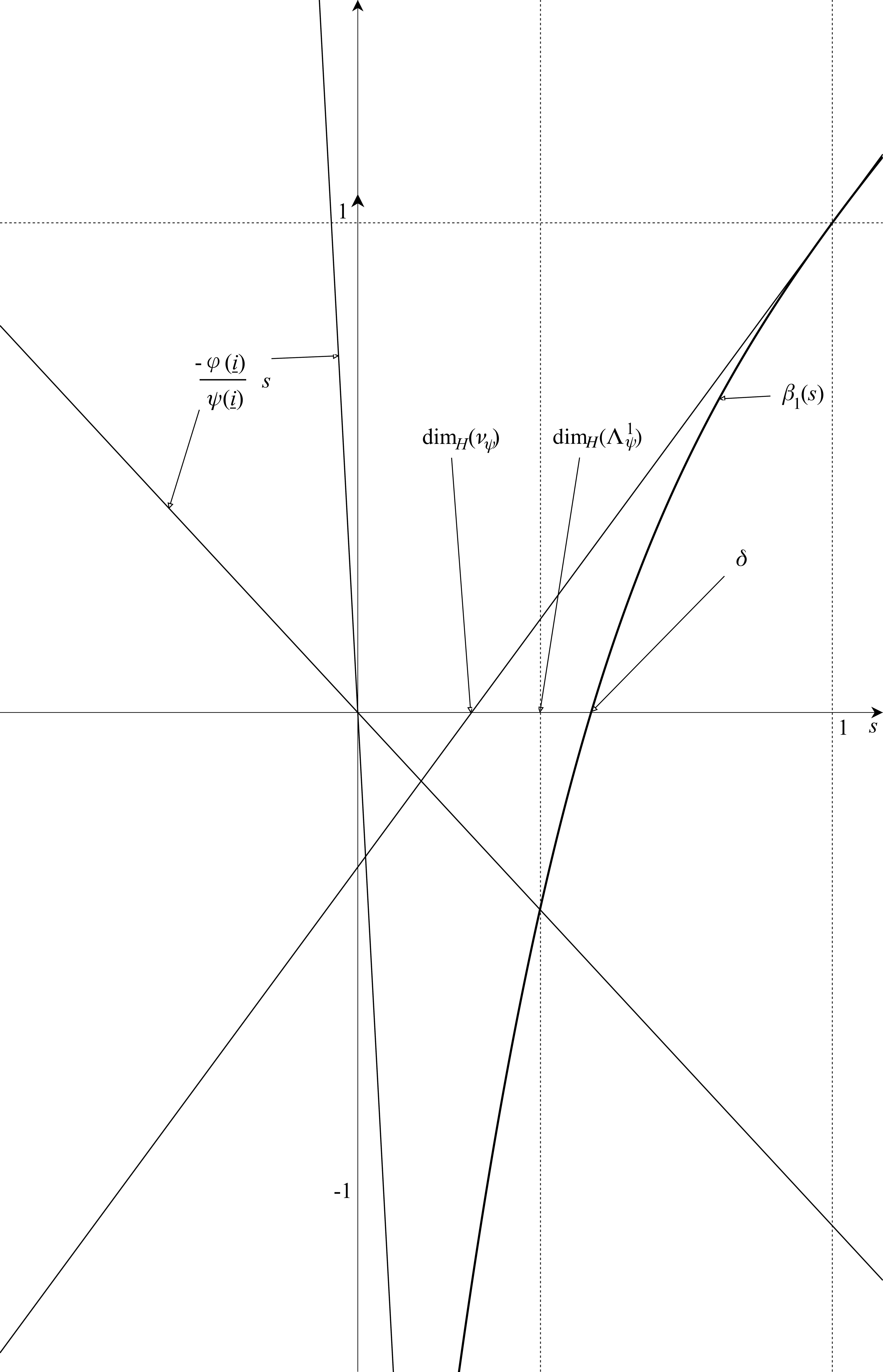}

\caption{\label{fig:Darst2} The graph of $\beta_{\alpha}$ for the Darst case
with $a_{1}=0.01$, $a_{2}=0.8$ and $\alpha=1$. In this case $\dim_{H}\left(\nu_{\psi}\right)<\dim_{H}\left(\Lambda_{\psi}^{1}\right)$
(cf. Main Theorem and Proposition \ref{pro:tangent}).}

\end{figure}

\[
\beta_{\alpha}\left(s\right)=\frac{\delta-s}{\delta-\alpha}.\]
 Inserting this into (\ref{eq:linRel}), we obtain $s+\delta(\delta-s)/(\delta-\alpha)=0$.
Solving the latter for $s$, one rediscovers Falconer's result \cite{Falconer:04}
on the Hausdorff dimension of $\Lambda_{\delta\phi}^{\alpha}$, namely
\begin{eqnarray}
\dim_{H}(\Lambda_{\delta\phi}^{\alpha})=\frac{\delta^{2}}{\alpha},\,\,\hbox{for\, all}\,\,\alpha>\delta.\label{formula}\end{eqnarray}
 Let us point out that, as also noted in \cite{Falconer:04}, the
equality in (\ref{formula}) remains to be true for $\alpha=\delta$.
However, this case requires some additional care, such as for instance
the use of ergodicity of the measure $\nu_{\delta\phi}$ or alternatively
some Khintchine-type argument, and hence let us not go into the details
here. Also, note that (\ref{formula}) in particular includes the
result of Darst \cite{Darst:93}, who only considered the case $\alpha=1$
for Cantor sets and showed that in this special linear situation one
has \[
\dim_{H}(\Lambda_{\delta\phi}^{1})=\delta^{2}.\]

\subsection*{\emph{The Darst self-similar case.}}

Here, only the case $\alpha=1$ has previously been considered in
the literature. For instance, in his studies of Cantor sets Darst
\cite{Darst:95} considered self-similar measures with probabilities
$p_{1}:=a_{1}/(a_{1}+a_{2})$ and $p_{2}:=a_{2}/(a_{1}+a_{2})$, where
$a_{1}$ and $a_{2}$ refer to the contraction rates of the two similarities
generating the underlying Cantor set. This Darst scenarion is contained
as a special case in our Main Theorem. Namely, here the $f_{a}$ have
to be linear contractions and the potential function $\psi$ has to
be equal to $\phi-P(\phi)$. We then have that the Hausdorff dimension
$s$ of $\Lambda_{\delta\phi}^{\alpha}$ and $\beta_{1}(s)$ can be
calculated explicitly. Indeed, here (\ref{pressure}) implies $P\left(s\phi-\beta_{1}(s)P\left(\phi\right)\right)=0$,
which gives \[
\beta_{1}\left(s\right)=\frac{P\left(s\phi\right)}{P\left(\phi\right)}.\]
 Inserting this into (\ref{eq:linRel}) gives that $s$ is the unique
solution of the pressure equation \[
sP(\phi)=P(s\phi)\left(\frac{P(\phi)}{\min\{\phi(\underline{i}):i\in\{0,1\}\}}-1\right).\]
 Besides Darst, this self-similar situation has previously also been
considered by Li \cite{Li:07}, Li $\&$ Xiao $\&$ Dekking \cite{LiXiaoDekking:02},
and Morrison \cite{Mo:02}.

\textbf{Acknowledgement:} We would like to thank the Mathematische
Forschungsinstitut Oberwolfach for its support and for the excelent
research enviroment. We began with the work towards this paper during
our two weeks Research-in-Pairs visit to Oberwolfach.

\section{Preliminaries\label{section2}}

\subsection{Thermodynamic formalism for iterated function systems}

Throughout, we will consider the following type of conformal iterated
function systems $\mathcal{F}$. For some compact connected set $X\subset\R$
and with $A:=\{0,1,\ldots,d\},$ let $\mathcal{F}=\{f_{a}:\, a\in A\}$
be generated by differentiable contractions $f_{a}:X\to\Int X$ such
that the following two conditions hold.

\vspace{2mm}

\noindent {\em{Strong separation condition}}. $\,$ $f_{a}(\Int(X))\subset\Int(X)$
for all $a\in A$, and $f_{a}(X)\cap f_{b}(X)=\emptyset$, for each
pair of distinct $a,b\in A$.

\vspace{2mm}

\noindent {\em{Hölder condition}}. There exists $\epsilon>0$
and an open interval $Y\supset X$ such that $f_{a}$ has a $C^{1+\epsilon}-$continuation
$\tilde{f}_{a}$ to $Y$ for which $\tilde{f}_{a}\left(Y\right)\subset Y$
and $\tilde{f}_{a}:Y\to\tilde{f}_{a}\left(Y\right)$ is a diffeomorphism,
for each $a\in A$.

\vspace{2mm}
 Note that the Hölder condition immediately implies the bounded distortion
property. That is, we in particular have that $\mathcal{F}$ has the
following property.\vspace{2mm}

\noindent {\em{Bounded distortion property}}. $\,$ For each
$\omega\in A^{n},n\in\N$ and $\xi,\eta\in X$, we have \[
|f_{\omega}'(\xi)|\asymp|f_{\omega}'(\eta)|.\]
\vspace{2mm}
Here we have used the notation $f_{\omega}:=f_{x_{1}}\circ f_{x_{2}}\circ\ldots\circ f_{x_{n}}$
for $\omega=x_{1}x_{2}\ldots x_{n}\in A^{n}$. Without loss of generality,
we will always assume that the intervals $\{f_{a}(X):a\in A\}$ are
labeled as follows. If $\xi\in f_{0}(X)$ and $\eta\in f_{a}(X)$
for some $a\in A\setminus\{0\}$, then $\xi<\eta$. Likewise, if $\xi\in f_{1}(X)$
and $\eta\in f_{a}(X)$ for some $a\in A\setminus\{1\}$, then $\xi>\eta$.
In other words, $f_{0}(X)$ ($f_{1}(X)$ resp.) is assumed to be the
utter left (right resp.) interval in the first iteration level $\{f_{a}(X):a\in A\}$
of $\mathcal{F}$.

\noindent Recall that the limit set $\mathcal{L}$ of $\mathcal{F}$
is the unique non-empty compact subset of $\R$ which satisfies $\mathcal{L}=\bigcup_{a\in A}f_{a}(\mathcal{L})$.
Equivalently, $\mathcal{L}$ is given by \[
\mathcal{L}:=\bigcap_{n\in\N}\bigcup_{\omega\in A^{n}}f_{\omega}(X).\]
 Clearly, the latter description of $\mathcal{L}$ immediately shows
that each element of $\mathcal{L}$ can be coded in a unique way by
an infinite word with letters chosen from the alphabet $A$. That
is, there is a bijective coding map $\Phi:A^{\N}\to\mathcal{L}$,
which is given by \[
\Phi:(x_{1},x_{2},\ldots)\mapsto\bigcap_{n\in\N}f_{x_{1}\ldots x_{n}}(X).\]
 For ease of exposition, we will make no explicit destinction between
$(x_{1},x_{2},\ldots)\in A^{\N}$ and $\xi:=\Phi((x_{1},x_{2},\ldots))\in\mathcal{L}$.
Also, throughout we assume that the reader is familiar with the following
basic concepts of the thermodynamic formalism (see e.g. \cite{Bowen:75},
\cite{Denker:05}, \cite{Pesin:97}, \cite{Ruelle:78}), where we
use the common notation for cylinder sets $[x_{1}\ldots x_{n}]:=\{y=(y_{1}y_{2}\ldots)\in\mathcal{L}:y_{i}=x_{i},\hbox{\, for\, all\,}\, i\in\{1,\ldots,n\}\}$,
as well as the notation for Birkhoff sums $S_{n}g:=\sum_{k=0}^{n-1}g\circ\sigma^{k}$
with $\sigma$ referring to the usual left-shift map on $A^{\N}$.

\begin{itemize}
\item The canonical geometric potential $\phi:\mathcal{L}\to\R$ associated
with $\mathcal{F}$ is given by \[
\phi(\xi):=\log f_{x_{1}}'(\xi),\,\hbox{for\, all}\,\,\xi=(x_{1}x_{2}\ldots)\in\mathcal{L}.\]
 Note that, since $\mathcal{F}$ satisfies the Hölder condition, $\phi$
is Hölder-continuous. 
\item The pressure function $P:C\left(\mathcal{L},\R\right)\to\R$ is given
for continuous potential functions $g:\mathcal{L}\to\R$ by \[
P(g):=\lim_{n\to\infty}\frac{1}{n}\log\sum_{\omega\in A^{n}}\exp(\sup_{\xi\in[\omega]}S_{n}g(\xi)).\]

\item Throughout, let $\psi\in C\left(\mathcal{L},\R\right)$ refer to a
given Hölder-continuous function for which $\psi<0$, $P(\psi)=0$,
and $\psi>\alpha\phi$. Then it is well-known that there is a Gibbs
measure $\nu_{\psi}$ associated with $\psi$ such that \[
\nu_{\psi}([\omega])\asymp e^{S_{n}\psi(\xi)},\,\,\hbox{for\, all}\,\,\xi\in[\omega],\omega\in A^{n},n\in\N.\]

\item Also, throughout we let $\chi_{\alpha}:=\psi-\alpha\phi>0$, for some
$\alpha\in\R$, and then consider the potential function \[
s\phi+t\chi_{\alpha},\,\,\hbox{for}\,\, s,t\in\R.\]
 By standard thermodynamic formalism, there then exists a strictly
increasing, concave, and real-analytic function $\beta_{\alpha}:\R\to\R$
such that \[
P(s\phi+\beta_{\alpha}(s)\chi_{\alpha})=0,\,\,\hbox{for\, all}\,\, s\in\R.\]
 The Gibbs measure associated with the potential function $s\phi+\beta_{\alpha}(s)\chi_{\alpha}$
will be denoted by $\mu_{s}$. Note that the measure $\mu_{s}$ satisfies
\[
\mu_{s}([\omega])\asymp e^{sS_{n}\phi(\xi)+\beta_{\alpha}(s)S_{n}\chi_{\alpha}(\xi)},\,\,\hbox{for\, all}\,\,\xi\in[\omega],\omega\in A^{n},n\in\N.\]

\end{itemize}
\noindent The following proposition in particular shows in which way
the values of $\delta$ and $\dim_{H}(\nu_{\psi})$ can be obtained
from the graph of the function $\beta_{\alpha}$. We refer to Fig.
\ref{fig:Darst1} and Fig. \ref{fig:Darst2} for illustrations of
this proposition for the special case $\alpha=1$.

\begin{prop}
\label{pro:tangent} We have that the unique zero of $\beta_{\alpha}$
is at $\delta$ and that $\beta_{\alpha}\left(\alpha\right)=1$. Moreover,
$\dim_{H}\left(\nu_{\psi}\right)$ is the point of intersection of
the $x$--axis with the tangent of the graph of $\beta_{\alpha}$
at the point $\left(\alpha,1\right)$. 
\end{prop}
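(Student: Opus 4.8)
The plan is to establish the three assertions separately, exploiting the uniqueness built into the definition of $\beta_{\alpha}$ for the first two and an implicit differentiation of the pressure equation for the third.

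For the first assertion I would invoke Bowen's formula, which characterises the Hausdorff dimension $\delta$ of $\mathcal{L}$ as the unique zero of the pressure function $t\mapsto P(t\phi)$, i.e.\ $P(\delta\phi)=0$. Hence the pair $(s,t)=(\delta,0)$ solves $P(s\phi+t\chi_{\alpha})=0$, and by the uniqueness characterising $\beta_{\alpha}$ we conclude $\beta_{\alpha}(\delta)=0$; uniqueness of this zero follows from the fact, recorded in Section~\ref{section2}, that $\beta_{\alpha}$ is strictly increasing. For the second assertion I would simply observe that $\alpha\phi+1\cdot\chi_{\alpha}=\alpha\phi+(\psi-\alpha\phi)=\psi$, so that $P(\alpha\phi+\chi_{\alpha})=P(\psi)=0$ by hypothesis; again uniqueness gives $\beta_{\alpha}(\alpha)=1$.

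The crux is the third assertion. The first key observation is that the identity $\beta_{\alpha}(\alpha)=1$ just established means that the Gibbs measure $\mu_{\alpha}$ associated with $\alpha\phi+\beta_{\alpha}(\alpha)\chi_{\alpha}$ is precisely $\nu_{\psi}$, since that potential equals $\psi$. Next I would differentiate the defining relation $P(s\phi+\beta_{\alpha}(s)\chi_{\alpha})=0$ with respect to $s$. Using the standard formula from thermodynamic formalism that the derivative of the pressure along a smooth family of potentials is the integral of the derivative of the potential against the corresponding equilibrium state, together with the chain rule and the analyticity of $\beta_{\alpha}$, this yields $\int(\phi+\beta_{\alpha}'(s)\chi_{\alpha})\,d\mu_{s}=0$ for all $s$. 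Evaluating at $s=\alpha$ and using $\mu_{\alpha}=\nu_{\psi}$ gives $\int\phi\,d\nu_{\psi}+\beta_{\alpha}'(\alpha)\int\chi_{\alpha}\,d\nu_{\psi}=0$. Writing $h:=h(\nu_{\psi})$ for the measure-theoretic entropy and $\lambda:=-\int\phi\,d\nu_{\psi}>0$ for the Lyapunov exponent, the variational identity $P(\psi)=h+\int\psi\,d\nu_{\psi}=0$ forces $\int\psi\,d\nu_{\psi}=-h$; hence, from $\chi_{\alpha}=\psi-\alpha\phi$, I obtain $\int\chi_{\alpha}\,d\nu_{\psi}=\alpha\lambda-h$. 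Substituting gives $\beta_{\alpha}'(\alpha)=\lambda/(\alpha\lambda-h)$. The tangent to the graph of $\beta_{\alpha}$ at $(\alpha,1)$ meets the $x$-axis at $\alpha-1/\beta_{\alpha}'(\alpha)=\alpha-(\alpha\lambda-h)/\lambda=h/\lambda$, and by the volume lemma (Young's dimension formula for conformal Gibbs measures) $\dim_{H}(\nu_{\psi})=h/\lambda$, so the assertion follows.

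The main obstacle I anticipate is the rigorous justification of the differentiation step: one must know that the pressure function is differentiable along the family $s\phi+\beta_{\alpha}(s)\chi_{\alpha}$ and that its derivative is given by integration against $\mu_{s}$. This rests on the real-analyticity of the pressure for H\"older potentials and the differentiability of $\beta_{\alpha}$, both of which are available from standard thermodynamic formalism as recorded in Section~\ref{section2}; the remaining work is the bookkeeping carried out above together with the invocation of the volume lemma for the dimension of $\nu_{\psi}$.
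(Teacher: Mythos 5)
Your proposal is correct and follows essentially the same route as the paper: both verify $\beta_{\alpha}(\delta)=0$ and $\beta_{\alpha}(\alpha)=1$ from the uniqueness in the pressure equation, and both compute $\beta_{\alpha}'(\alpha)=-\int\phi\,d\nu_{\psi}/\int\chi_{\alpha}\,d\nu_{\psi}$ to locate where the tangent at $(\alpha,1)$ meets the $x$-axis. The only difference is that you spell out the implicit differentiation of the pressure and the volume-lemma identity $\dim_{H}(\nu_{\psi})=h/\lambda$ explicitly, where the paper simply cites these as well-known facts from thermodynamic formalism.
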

\begin{proof}
Since $P\left(\delta\phi+0\chi_{\alpha}\right)=P\left(\delta\phi\right)=0$,
we necessarily have $\beta_{\alpha}\left(\delta\right)=0$. Also ,since
$P\left(\alpha\phi+\chi_{\alpha}\right)=P\left(\alpha\phi+\psi-\alpha\phi\right)=P\left(\psi\right)=0,$
we necessarily have that $\beta_{\alpha}\left(\alpha\right)=1$. Therefore,
by employing well-known identities from the thermodynamic formalism
(see e.g. \cite{Denker:05}), it follows \[
\beta_{\alpha}'\left(1\right)=-\frac{\int\phi\, d\nu_{\psi}}{\int\chi_{\alpha}\, d\nu_{\psi}}=\frac{1}{\alpha-\int\psi\, d\nu_{\psi}/\int\phi\, d\nu_{\psi}}=\frac{1}{\alpha-\dim_{H}\left(\nu_{\psi}\right)}.\]
 This shows that the tangent $L_{(\alpha,1)}$ of the graph of $\beta_{\alpha}$
at $\left(\alpha,1\right)\in\R^{2}$ is given by \[
L_{(\alpha,1)}(x):=\frac{1}{\alpha-\dim_{H}\left(\nu_{\psi}\right)}x-\frac{\alpha}{\alpha-\dim_{H}\left(\nu_{\psi}\right)}+1.\]
 By solving the equation $L_{(\alpha,1)}(x)=0$ for $x$, the assertion
of the proposition follows. 
\end{proof}

\subsection{The derivative of the Gibbs distribution function }

As noted in the introduction, let \[
F_{\psi}:\R\to[0,1],\, x\mapsto\nu_{\psi}((-\infty,x]),\]
 refer to the distribution function of the Gibbs measure $\nu_{\psi}$
associated with the iterated function system $\mathcal{F}$ and the
potential function $\psi$. We always assume that $\mathcal{F}$ and
$\psi$ are chosen as specified in the previous section. Also, define
\[
\mathcal{E}:=\{(x_{1}x_{2}\ldots)\in\mathcal{L}:\,\hbox{there\, exist}\,\, i\in\{0,1\},n\in\N\,\,\hbox{such\, that}\,\, x_{k}=i\,\,\hbox{forall}\, k\geq n\},\]
 and let $\mathcal{L}^{*}:=\mathcal{L}\setminus\mathcal{E}$ refer
to the limit set of $\mathcal{F}$ without the countable set of `end
points' whose code eventually has only either $0$'s or $1$'s.

\begin{lem}
\label{lem0} For the upper $\alpha$-Hölder derivative of $F_{\psi}$
we have \[
\limsup_{\eta\to\xi}\frac{|F_{\psi}(\xi)-F_{\psi}(\eta)|}{|\xi-\eta|^{\alpha}}=\infty,\,\,\hbox{for\, all}\,\,\xi\in\mathcal{L}^{*}.\]

\end{lem}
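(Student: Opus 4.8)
The plan is to realise the $\alpha$-Hölder difference quotient of $F_\psi$, along a carefully chosen sequence $\eta_n\to\xi$, as (a constant times) $\exp\bigl(S_n\chi_\alpha(\xi)\bigr)$, and then to exploit that $\chi_\alpha=\psi-\alpha\phi$ is bounded below by a positive constant. The whole argument rests on translating measures and diameters of cylinders into Birkhoff sums of $\phi$ and $\psi$, and on placing the comparison points $\eta_n$ so that they capture a full neighbouring cylinder's worth of $\nu_\psi$-mass.

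First I record the two basic cylinder estimates. Fix $\xi=(x_1x_2\ldots)\in\mathcal L^*$ and write $C_n:=[x_1\ldots x_n]$. By the bounded distortion property together with the mean value theorem, $\diam(C_n)\asymp|f_{x_1\ldots x_n}'(\xi)|=e^{S_n\phi(\xi)}$, while the Gibbs property gives $\nu_\psi(C_n)\asymp e^{S_n\psi(\xi)}$; since $\phi$ and $\psi$ are bounded, the same holds, up to a multiplicative constant, for any sub-cylinder $[x_1\ldots x_n i]$ of one additional letter. Hence, for any such level-$(n+1)$ sub-cylinder $D\subseteq C_n$,
\[
\frac{\nu_\psi(D)}{\diam(C_n)^\alpha}\asymp\frac{e^{S_n\psi(\xi)}}{e^{\alpha S_n\phi(\xi)}}=e^{S_n(\psi-\alpha\phi)(\xi)}=e^{S_n\chi_\alpha(\xi)}.
\]
Because $\chi_\alpha$ is continuous and strictly positive on the compact set $\mathcal L$, it is bounded below by some $c>0$, so $S_n\chi_\alpha(\xi)\ge nc\to\infty$. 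Thus the cylinder ratio already diverges, and it only remains to realise it by genuine points.

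This is where the hypothesis $\xi\in\mathcal L^*$ enters. Since the code of $\xi$ is not eventually constant equal to $0$, there are infinitely many indices $n$ with $x_{n+1}\neq 0$. For each such $n$, the leftmost sub-cylinder $[x_1\ldots x_n 0]$ of $C_n$ lies, by the strong separation condition and the chosen left-to-right labelling, strictly to the left of $\xi$. Let $\eta_n$ be the left endpoint of $C_n$, i.e. the point coded by $x_1\ldots x_n\underline{0}$. Then $\eta_n<\xi$, one has $(\eta_n,\xi]\supseteq[x_1\ldots x_n 0]$, and since $\nu_\psi$ is non-atomic,
\[
|F_\psi(\xi)-F_\psi(\eta_n)|=\nu_\psi((\eta_n,\xi])\ge\nu_\psi([x_1\ldots x_n 0])\asymp e^{S_n\psi(\xi)}.
\]
On the other hand $\eta_n,\xi\in C_n$, so $|\xi-\eta_n|\le\diam(C_n)\asymp e^{S_n\phi(\xi)}$. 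Combining these with the cylinder ratio above yields
\[
\frac{|F_\psi(\xi)-F_\psi(\eta_n)|}{|\xi-\eta_n|^\alpha}\gtrsim e^{S_n\chi_\alpha(\xi)}\ge e^{nc}.
\]
As $\diam(C_n)\to0$, we have $\eta_n\to\xi$ with $\eta_n\neq\xi$, and the right-hand side tends to $\infty$ along this infinite sequence of levels; taking the limit superior over $\eta\to\xi$ gives the asserted value $\infty$.

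The only genuinely delicate point is the lower bound on the numerator: one must guarantee, at infinitely many scales, a full neighbouring cylinder of $\nu_\psi$-mass on one fixed side of $\xi$ and at distance $O(\diam(C_n))$. This is exactly what excluding the end points $\mathcal E$ secures, through the observation that a non-endpoint code fails to be eventually $0$ (and, symmetrically, eventually $1$), so that at infinitely many levels $\xi$ possesses a sibling sub-cylinder to its left (resp.\ right). Everything else is the standard Gibbs and bounded-distortion bookkeeping and is routine.
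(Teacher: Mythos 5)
Your proof is correct and follows essentially the same route as the paper's: you bound the Hölder quotient from below by the $\nu_\psi$-mass of a neighbouring one-letter sub-cylinder divided by $\diam([x_1\ldots x_n])^\alpha$, identify this with $e^{S_n\chi_\alpha(\xi)}$ via the Gibbs and bounded distortion properties, and let $\inf\chi_\alpha>0$ force divergence; the only cosmetic difference is that you work with the left sub-cylinder $[x_1\ldots x_n 0]$ and left endpoint (using that the code is not eventually $0$), whereas the paper takes the mirror image with $[x_1\ldots x_n 1]$ and the right endpoint.
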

\begin{proof}
Let $\xi=(x_{1}x_{2}\ldots)\in\mathcal{L}^{*}$ be given. We then
have $\xi\in[x_{1}\ldots x_{n}]$, for each $n\in\N$. Also, since
$\xi$ is not in $\mathcal{E}$, there exists $(n_{m})_{m\in\N}$
such that $x_{n_{m}+1}\neq1$, for all $m\in\N$. In this situation
we then have $\xi\notin[x_{1}\ldots x_{n_{m}}1]$. Moreover, note
that $\nu_{\psi}([x_{1}\ldots x_{n_{m}}1])\asymp\nu_{\psi}([x_{1}\ldots x_{n_{m}}])$.
Using this and the bounded distortion property, it follows, with $\eta_{m}$
referring to the right endpoint of $[x_{1}\ldots x_{n_{m}}]$, that
is $\eta_{m}:=(x_{1}\ldots x_{n_{m}}\underline{1})\in\mathcal{E}$,
\begin{eqnarray*}
\frac{|F_{\psi}(\xi)-F_{\psi}(\eta_{m})|}{|\xi-\eta_{m}|^{\alpha}} & \geq & \frac{\nu_{\psi}([x_{1}\ldots x_{n_{m}}1])}{\diam([x_{1}\ldots x_{n_{m}}])^{\alpha}}\gg\frac{\nu_{\psi}([x_{1}\ldots x_{n_{m}}])}{\diam([x_{1}\ldots x_{n_{m}}])^{\alpha}}\\
 & \gg & \frac{\exp(S_{n_{m}}\psi(\xi))}{\exp(S_{n_{m}}\alpha\phi(\xi))}=e^{S_{n_{m}}\chi_{\alpha}(\xi)}\geq e^{n_{m}\inf_{\eta\in\mathcal{L}}\chi_{\alpha}(\eta)}.\end{eqnarray*}
 Since $\chi_{\alpha}>0,$ the result follows. 
\end{proof}
\begin{defn*}
\noindent Let us say that $\xi=(x_{1}x_{2}\ldots)\in\mathcal{L}$
has an $i$\emph{-block of length $k$ at the $n$-th level}, for
$n,k\in\N$ and $i\in\{0,1\}$, if $x_{n},x_{n+k+1}\in A\setminus\{i\}$
and $x_{n+m}=i$, for all $m\in\{1,\ldots,k\}$. 
\end{defn*}
\begin{prop}
\label{Prop1} If for $i\in\{0,1\}$ we have that $\xi=(x_{1}x_{2}\ldots)\in\mathcal{L}$
has an $i$-block of length $k$ at the $n$-th level, then there
exists $\eta\in\mathcal{L}$ such that $|\xi-\eta|\asymp\exp(S_{n}\phi(\xi))$,
and \[
\frac{|F_{\psi}(\xi)-F_{\psi}(\eta)|}{|\xi-\eta|^{\alpha}}\asymp\e^{S_{n}\chi_{\alpha}(\xi)}\cdot\e^{k\psi(\underline{i})}.\]

\end{prop}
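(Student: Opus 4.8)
The plan is to treat only the case $i=0$; the case $i=1$ follows by the mirror-symmetric argument (replacing ``leftmost/left neighbour/left endpoint'' by ``rightmost/right neighbour/right endpoint'' and $\underline{0}$ by $\underline{1}$). Write $C:=[x_1\ldots x_n]$ and note that by bounded distortion $\diam(C)\asymp\e^{S_n\phi(\xi)}$. Since $0$ codes the utter-left interval, the hypothesis says that $\xi$ lies in the nested leftmost cylinder $C_{0^k}:=[x_1\ldots x_n 0^k]$ but not in $C_{0^{k+1}}$; in particular $\xi$ and the whole family $C_{0^j}$ share the left endpoint of $C$. I will produce $\eta\in\mathcal{L}$ with $|\xi-\eta|\asymp\diam(C)$ and $|F_\psi(\xi)-F_\psi(\eta)|\asymp\nu_\psi(C_{0^k})$. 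Granting this, the proposition follows at once: using $\diam(C)^\alpha\asymp\e^{\alpha S_n\phi(\xi)}$ one gets
\[
\frac{|F_\psi(\xi)-F_\psi(\eta)|}{|\xi-\eta|^\alpha}\asymp\frac{\nu_\psi(C_{0^k})}{\e^{\alpha S_n\phi(\xi)}},
\]
and $\nu_\psi(C_{0^k})\asymp\e^{S_n\psi(\xi)+k\psi(\underline{0})}$ turns the right-hand side into $\e^{S_n\psi(\xi)-\alpha S_n\phi(\xi)}\e^{k\psi(\underline{0})}=\e^{S_n\chi_\alpha(\xi)}\e^{k\psi(\underline{0})}$.

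The measure estimate $\nu_\psi(C_{0^k})\asymp\e^{S_n\psi(\xi)+k\psi(\underline{0})}$ is the routine thermodynamic input. By the Gibbs property $\nu_\psi(C_{0^k})\asymp\e^{S_{n+k}\psi(\xi)}$, and $S_{n+k}\psi(\xi)=S_n\psi(\xi)+\sum_{j=0}^{k-1}\psi(\sigma^{n+j}\xi)$. For $0\le j<k$ the point $\sigma^{n+j}\xi$ begins with $k-j$ consecutive $0$'s, hence shares a cylinder of length $k-j$ with $\underline{0}$, so $|\sigma^{n+j}\xi-\underline{0}|$ decays geometrically in $k-j$; Hölder-continuity of $\psi$ then makes $\sum_{j}|\psi(\sigma^{n+j}\xi)-\psi(\underline{0})|$ uniformly bounded, giving $S_{n+k}\psi(\xi)=S_n\psi(\xi)+k\psi(\underline{0})+O(1)$.

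The heart of the argument is the geometric choice of $\eta$. The crucial observation is that the very definition of a $0$-block forces $x_n\neq 0$, so that $C$ is \emph{not} the leftmost child of its parent $[x_1\ldots x_{n-1}]$. Let $[x_1\ldots x_{n-1}a]$ be the child immediately to the left of $C$ and put $\eta:=(x_1\ldots x_{n-1}a\underline{1})\in\mathcal{L}$, its right endpoint. By the strong separation condition the gap separating these two sibling intervals is comparable to the diameter of the parent, which by bounded distortion and boundedness of $\phi$ is $\asymp\diam(C)$; since $\xi$ lies within $C$ (indeed within $\diam(C_{0^k})$ of its left endpoint), this yields $|\xi-\eta|\asymp\diam(C)\asymp\e^{S_n\phi(\xi)}$. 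For the measure, between $\eta$ and the left endpoint of $C$ there is only the separating gap, which carries no $\nu_\psi$-mass, so $|F_\psi(\xi)-F_\psi(\eta)|=\nu_\psi([\eta,\xi])$ equals the $\nu_\psi$-mass of $\mathcal{L}\cap C$ lying to the left of $\xi$. A short symbolic comparison shows that any $z\in\mathcal{L}\cap C$ with $z<\xi$ must agree with $\xi$ throughout the block (otherwise it would have to branch to the left of a leftmost $0$, which is impossible), hence lies in $C_{0^k}$; conversely $C_{0^{k+1}}$ lies entirely to the left of $\xi$. Thus $\nu_\psi(C_{0^{k+1}})\le\nu_\psi([\eta,\xi])\le\nu_\psi(C_{0^k})$, and since $\nu_\psi(C_{0^{k+1}})\asymp\nu_\psi(C_{0^k})$ we conclude $|F_\psi(\xi)-F_\psi(\eta)|\asymp\nu_\psi(C_{0^k})$.

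I expect the geometric step to be the main obstacle: one must pin down $\eta$ so that the distance $|\xi-\eta|$ is the \emph{full} $\asymp\e^{S_n\phi(\xi)}$ while the enclosed $\nu_\psi$-mass is \emph{only} that of the block cylinder $C_{0^k}$, and it is precisely the hypothesis $x_n\neq i$ together with strong-separation gap control that reconciles these two requirements. The containment ``everything of $\mathcal{L}\cap C$ to the left of $\xi$ sits in $C_{0^k}$'' is the delicate point that must be verified with care.
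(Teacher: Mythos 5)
Your proof is correct and follows essentially the same route as the paper's: you place $\eta$ just across the separating gap adjacent to the end of $[x_{1}\ldots x_{n}]$ that the $i$-block cylinder hugs, so that $|\xi-\eta|\asymp\diam([x_{1}\ldots x_{n}])\asymp\e^{S_{n}\phi(\xi)}$ while the enclosed mass is squeezed between $\nu_{\psi}([x_{1}\ldots x_{n+k}\,j])$ and $\nu_{\psi}([x_{1}\ldots x_{n+k}])$, both $\asymp\e^{S_{n}\psi(\xi)}\e^{k\psi(\underline{i})}$. The paper writes out the mirror case $i=1$ (with $\eta$ the near endpoint of the right neighbour and $\hat{\eta}$ the right endpoint of $[x_{1}\ldots x_{n}]$); your version of the symbolic containment argument for why everything in $[x_{1}\ldots x_{n}]$ on the relevant side of $\xi$ lies in the block cylinder is just a slightly more explicit rendering of the same step.
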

\begin{proof}
Let $\xi=(x_{1}x_{2}\ldots)\in\mathcal{L}$ be given as stated in
the lemma. Let us only consider the case $i=1$. The case $i=0$ can
be dealt with in a similar way, and this is left to the reader. We
then have that there exists $j\in A$ such that the interval $J$
bounded by the points $\eta:=(x_{1}\ldots x_{n-1}j\underline{0})\in\mathcal{E}$
and $\hat{\eta}:=(x_{1}\ldots x_{n}\underline{1})\in\mathcal{E}$
is a `gap interval' in the construction of $\mathcal{L}$ such that
$J\cap\mathcal{L}^{*}=\emptyset$ (that is, $J$ denotes the gap interval
in the construction of $\mathcal{L}$ separating $[x_{1}\ldots x_{n}]$
and its right neighbour in the $n$-th level). Using the bounded distortion
property and the strong seperation condition, we then have \[
|\xi-\eta|\asymp\diam(J)\asymp\diam([x_{1}\ldots x_{n}])\asymp\e^{S_{n}\phi(\xi)}.\]
 Moreover, since $\nu_{\psi}(J)=0$, we have \begin{eqnarray*}
|F_{\psi}(\xi)-F_{\psi}(\eta)| & = & \nu_{\psi}((\xi,\eta])=\nu_{\psi}((\xi,\hat{\eta}])\leq\nu_{\psi}([x_{1}\ldots x_{n+k}])\\
 & \ll & \e^{S_{n+k}\psi(\xi)}\ll\e^{S_{n}\psi(\xi)}\cdot\e^{k\psi(\underline{1})}.\end{eqnarray*}
 Finally, by noting that $\xi\notin[x_{1}\ldots x_{n+k}1]$ and $[x_{1}\ldots x_{n+k}1]\subset(\xi,\hat{\eta}]$,
we similarly have \[
|F_{\psi}(\xi)-F_{\psi}(\eta)|=\nu_{\psi}((\xi,\hat{\eta}])\geq\nu_{\psi}([x_{1}\ldots x_{n+k}1])\gg\e^{S_{n+k+1}\psi(\xi)}\gg\e^{S_{n}\psi(\xi)}\cdot\e^{k\psi(\underline{1})}.\]
 Combining these observations, it follows \[
\frac{|F_{\psi}(\xi)-F_{\psi}(\eta)|}{|\xi-\eta|^{\alpha}}\asymp\frac{\exp(S_{n}\psi(\xi)+k\psi(\underline{1}))}{\exp(S_{n}\alpha\phi(\xi))}=e^{S_{n}\chi_{\alpha}(\xi)}\cdot e^{k\psi(\underline{1})}.\]

\end{proof}
\noindent The following corollary gives a generalization of a classical
result of Gilman \cite{Gilman:32}, who showed for Cantor sets that
if the derivative of the Cantor function exists in the generalized
sense at some point in the Cantor set, then it has to be equal to
infinity.

\begin{cor}
\label{cor1} If $D^{\alpha}F_{\psi}$ exists in the generalized sense
at $\xi\in\mathcal{L}^{*}$, then $(D^{\alpha}F_{\psi})(\xi)=\infty$.
On the other hand, there \emph{are} (plenty of) points in $\mathcal{L}^{*}$
at which $D^{\alpha}F_{\psi}$ does not exist in the generalized sense. 
\end{cor}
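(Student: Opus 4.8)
The plan is to derive the first assertion directly from Lemma \ref{lem0} and to obtain the second by an explicit construction built on Proposition \ref{Prop1}. For the first assertion, observe that $D^{\alpha}F_{\psi}$ exists in the generalized sense at $\xi$ exactly when the limit $\lim_{\eta\to\xi}|F_{\psi}(\xi)-F_{\psi}(\eta)|/|\xi-\eta|^{\alpha}$ exists in $[0,\infty]$. Whenever such a limit exists it agrees with the associated $\limsup$, and Lemma \ref{lem0} tells us that this $\limsup$ equals $\infty$ for every $\xi\in\mathcal{L}^{*}$. Hence, if the generalized derivative exists at a point of $\mathcal{L}^{*}$, it is necessarily equal to $\infty$; this settles the first part with no further work.

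For the second assertion I would fix $i\in\{0,1\}$ and construct points $\xi\in\mathcal{L}^{*}$ carrying a sequence of ever longer $i$-blocks, chosen so that the difference quotient remains bounded along an accompanying sequence $\eta_{m}\to\xi$. Concretely, I would prescribe the coding word of $\xi$ stage by stage, at the $m$-th stage placing an $i$-block of length $k_{m}$ at some level $n_{m}$, with the letters immediately bracketing the block taken from $A\setminus\{i\}$, so that the block is genuine in the sense of the Definition and so that $\xi$ never becomes eventually constant, i.e. $\xi\in\mathcal{L}^{*}$. Proposition \ref{Prop1} then supplies $\eta_{m}\in\mathcal{L}$ with $|\xi-\eta_{m}|\asymp\e^{S_{n_{m}}\phi(\xi)}$ and
\[
\frac{|F_{\psi}(\xi)-F_{\psi}(\eta_{m})|}{|\xi-\eta_{m}|^{\alpha}}\asymp\e^{S_{n_{m}}\chi_{\alpha}(\xi)}\cdot\e^{k_{m}\psi(\underline{i})}.
\]
Since $\phi<0$ on the compact set $\mathcal{L}$ we have $S_{n_{m}}\phi(\xi)\to-\infty$, so $\eta_{m}\to\xi$ as soon as $n_{m}\to\infty$. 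The decisive point is the balancing of $k_{m}$ against $S_{n_{m}}\chi_{\alpha}(\xi)$: because $\chi_{\alpha}>0$ is bounded we have the crude estimate $S_{n_{m}}\chi_{\alpha}(\xi)\le n_{m}\|\chi_{\alpha}\|_{\infty}$, which is valid irrespective of the letters chosen at later stages, and since $\psi(\underline{i})<0$ the choice $k_{m}:=\lceil n_{m}\|\chi_{\alpha}\|_{\infty}/|\psi(\underline{i})|\rceil$ forces the exponent $S_{n_{m}}\chi_{\alpha}(\xi)+k_{m}\psi(\underline{i})\le0$. Thus the displayed ratio stays bounded along $(\eta_{m})$, whence $\liminf_{\eta\to\xi}|F_{\psi}(\xi)-F_{\psi}(\eta)|/|\xi-\eta|^{\alpha}<\infty$; together with $\limsup=\infty$ from Lemma \ref{lem0} this shows that the generalized limit cannot exist at $\xi$.

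Finally, the word `plenty' is justified by the latitude in the construction: the levels $n_{m}$ (subject only to $n_{m+1}>n_{m}+k_{m}+1$, which keeps the blocks disjoint) and the filler letters between consecutive blocks may be chosen in uncountably many ways, all producing distinct words in $\mathcal{L}^{*}\cap\Lambda_{\psi}^{\alpha}$. The step I expect to require the most care is the combinatorial bookkeeping of the inductive construction — ensuring that successive $i$-blocks are correctly bracketed, mutually disjoint, and leave $\xi$ outside $\mathcal{E}$ — but no analytic difficulty arises, since the crude bound on $S_{n_{m}}\chi_{\alpha}$ already decouples the estimate at stage $m$ from all later choices.
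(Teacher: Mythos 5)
Your proof is correct and follows essentially the same route as the paper: the first assertion is read off from Lemma \ref{lem0}, and the second is obtained by constructing a point in $\mathcal{L}^{*}$ carrying a sequence of $i$-blocks whose lengths $k_{m}$ are calibrated against $S_{n_{m}}\chi_{\alpha}(\xi)$ so that Proposition \ref{Prop1} produces a bounded difference quotient along a sequence $\eta_{m}\to\xi$, which together with the infinite $\limsup$ rules out existence of the generalized limit. Your explicit choice $k_{m}:=\lceil n_{m}\|\chi_{\alpha}\|_{\infty}/|\psi(\underline{i})|\rceil$ simply makes concrete what the paper states as ``assume that $(n_{m})$ and $(k_{m})$ are chosen such that $\exp(S_{n_{m}}\chi_{\alpha}(\xi)+k_{m}\psi(\underline{i}))\ll1$''.
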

\begin{proof}
The first assertion is an immediate consequence of Lemma \ref{lem0}.
For the second assertion, choose strictly increasing sequences $(n_{m})_{m\in\N}$
and $(k_{m})_{m\in\N}$ such that $n_{m+1}>n_{m}+k_{m}$, and let
$\xi$ be an element of $\mathcal{L}$ which has an $i$-block of
length $k_{m}$ at the $n_{m}$-th level. Moreover, assume that $(n_{m})$
and $(k_{m})$ are chosen such that $\exp(S_{n_{m}}\chi_{\alpha}(\xi)+k_{m}\psi(\underline{i}))\ll1$.
Using Lemma \ref{Prop1}, it follows that there exists $(\eta_{m})_{m\in\N}$
such that $\lim_{m\to\infty}\eta_{m}=\xi$ and $|F_{\psi}(\xi)-F_{\psi}(\eta_{m})|/|\xi-\eta_{m}|^{\alpha}\ll1$,
for all $m\in\N$. From this we immediately deduce that for the lower
$\alpha$-Hölder derivative of $F_{\psi}$ we have \[
\liminf_{\eta\to\xi}\frac{|F_{\psi}(\xi)-F_{\psi}(\eta)|}{|\xi-\eta|^{\alpha}}<\infty.\]
 By combining the latter with Lemma \ref{lem0}, it follows that $(D^{\alpha}F_{\psi})(\xi)$
does not exist in the generalized sense. This finishes the proof. 
\end{proof}
\noindent For later use we also state the following immediate consequence
of Proposition \ref{Prop1}.

\begin{cor}
\label{cor2} Let $\xi=(x_{1}x_{2}\ldots)\in\mathcal{L}$ be given
such that for some $n,k\in\N$ and $i\in\{0,1\}$ we have $x_{n+m}=i$,
for all $m\in\{1,\ldots,k\}$. Then there exists $\eta\in\mathcal{L}$
such that $|\xi-\eta|\asymp\exp(S_{n}\phi(\xi))$ and \[
\frac{|F_{\psi}(\xi)-F_{\psi}(\eta)|}{|\xi-\eta|^{\alpha}}\ll\e^{S_{n}\chi_{\alpha}(\xi)}\cdot\e^{k\psi(\underline{i})}.\]

\end{cor}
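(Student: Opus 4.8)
The plan is to obtain this as a one-sided version of Proposition \ref{Prop1}: I would keep the very same geometric construction but retain only the \emph{upper} estimate for the numerator. This is all that is needed here, and it is exactly what frees us from the two ``block boundary'' conditions $x_n\neq i$ and $x_{n+k+1}\neq i$ used in Proposition \ref{Prop1}. As there, it suffices to treat $i=1$, the case $i=0$ being symmetric. The first observation is that the hypothesis $x_{n+1}=\ldots=x_{n+k}=1$ says precisely that $\xi\in[x_1\ldots x_{n+k}]$, and that, since $1$ codes the utter right interval, $[x_1\ldots x_{n+k}]$ is the right-most sub-cylinder of $[x_1\ldots x_n]$ of depth $n+k$.

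Next I would choose $\eta$ exactly as in the proof of Proposition \ref{Prop1}: with $\hat\eta:=(x_1\ldots x_n\underline 1)$ the right endpoint of $[x_1\ldots x_n]$, let $\eta$ be the left endpoint of the cylinder lying across the gap interval $J$ separating $[x_1\ldots x_n]$ from its right neighbour at the $n$-th level. By the strong separation condition and the bounded distortion property one then gets, just as in Proposition \ref{Prop1}, $|\xi-\eta|\asymp\diam(J)\asymp\diam([x_1\ldots x_n])\asymp\e^{S_n\phi(\xi)}$, which is the required scale. For the numerator I would bound only from above: since $J$ carries no $\nu_\psi$-mass and $(\xi,\hat\eta]\subset[x_1\ldots x_{n+k}]$ (because the latter is the right-most depth-$(n+k)$ sub-cylinder of $[x_1\ldots x_n]$), the Gibbs property together with the H\"older continuity of $\psi$ gives $|F_\psi(\xi)-F_\psi(\eta)|=\nu_\psi((\xi,\hat\eta])\le\nu_\psi([x_1\ldots x_{n+k}])\ll\e^{S_{n+k}\psi(\xi)}\ll\e^{S_n\psi(\xi)}\cdot\e^{k\psi(\underline 1)}$. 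In contrast to Proposition \ref{Prop1}, no lower bound on the numerator is needed, so the condition $x_{n+k+1}\neq1$ never enters. Dividing by $|\xi-\eta|^\alpha\asymp\e^{\alpha S_n\phi(\xi)}$ and recalling $\chi_\alpha=\psi-\alpha\phi$ then yields the asserted bound $\e^{S_n\chi_\alpha(\xi)}\cdot\e^{k\psi(\underline 1)}$.

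The one genuinely delicate point, and the step I expect to be the main obstacle, is the implicit use of a right neighbour of $[x_1\ldots x_n]$ \emph{at the $n$-th level}, which is available only when $x_n\neq1$. If instead $x_n=1$, the run of $1$'s extends below level $n$, the cylinder $[x_1\ldots x_n]$ is itself right-most in $[x_1\ldots x_{n-1}]$, and no point of $\mathcal L$ lies to the right of $\xi$ at distance $\asymp\e^{S_n\phi(\xi)}$ (to the right one must leave the whole $1$-block, landing at the coarser scale of its starting cylinder, while to the left one already captures $\nu_\psi$-mass of order $\e^{S_n\psi(\xi)}$). To handle this case I would pass to the maximal $1$-block: letting $n'\le n$ be the start of the run, so that $x_{n'}\neq1$ and $\xi$ has a $1$-block of length $k'\ge k+(n-n')$ at level $n'$, Proposition \ref{Prop1} applies verbatim and produces $\eta$ with ratio $\asymp\e^{S_{n'}\chi_\alpha(\xi)}\cdot\e^{k'\psi(\underline 1)}$. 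Since $\chi_\alpha>0$ and $\psi<0$ we have $\e^{S_{n'}\chi_\alpha(\xi)}\le\e^{S_n\chi_\alpha(\xi)}$ and $\e^{k'\psi(\underline 1)}\le\e^{k\psi(\underline 1)}$, so the required upper bound for the ratio persists. I would finally flag that in this degenerate case the natural scale of $|\xi-\eta|$ is $\e^{S_{n'}\phi(\xi)}$ rather than $\e^{S_n\phi(\xi)}$; in the intended applications $n$ is always the level immediately preceding a fresh block, so $x_n\neq i$ and this subtlety does not occur.
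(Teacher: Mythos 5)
Your main argument is exactly what the paper intends: Corollary \ref{cor2} is stated there without proof, as an ``immediate consequence'' of Proposition \ref{Prop1}, and rerunning that proposition's construction while keeping only the upper estimate on $\nu_{\psi}((\xi,\hat{\eta}])$ --- which is precisely the only place the hypothesis $x_{n+k+1}\neq i$ entered --- is the intended one-sided version. The edge case you flag is also genuine: if $x_{n}=i$ and the run of $i$'s is long in both directions, then any $\eta\in\mathcal{L}$ to the right of $\xi$ is either inside the run's cylinder (too close) or beyond the gap at the level where the run begins (too far), so $\eta$ must lie to the left, and the interval between $\xi$ and $\eta$ then swallows a sibling cylinder of $\nu_{\psi}$-mass $\gg\e^{S_{n}\psi(\xi)}$, which is incompatible with the claimed factor $\e^{k\psi(\underline{i})}$ for large $k$; the corollary is therefore implicitly normalized so that $n$ marks the start of the block (where $x_{n}\neq i$), which is how it would be invoked, and your maximal-block fallback correctly salvages the ratio bound, albeit only at the coarser scale $\e^{S_{n'}\phi(\xi)}$.
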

\begin{prop}
\label{Prop3} We have that $D^{\alpha}F_{\psi}$ does not exist in
the generalized sense at $\xi\in\mathcal{L}^{*}$ if and only if there
exists $i\in\left\{ 0,1\right\} $ and strictly increasing sequences
$(n_{m})_{m\in\N}$ and $(k_{m})_{m\in\N}$ of positive integers such
that $\xi$ has an $i$-block of length $k_{m}$ at the $n_{m}$-th
level for each $m\in\N$, and \[
e^{S_{n_{m}}\chi_{\alpha}(\xi)+k_{m}\psi(\underline{i})}\ll1,\,\,\hbox{for\, each}\,\, m\in\N.\]

\end{prop}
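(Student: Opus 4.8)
The plan is to recast the statement via the lower $\alpha$-H\"older derivative and then read the block data off a suitable approximating sequence. By Lemma~\ref{lem0} the upper $\alpha$-H\"older derivative equals $\infty$ at every $\xi\in\mathcal L^{*}$; hence $D^{\alpha}F_{\psi}$ fails to exist in the generalized sense at $\xi$ precisely when $\liminf_{\eta\to\xi}|F_{\psi}(\xi)-F_{\psi}(\eta)|/|\xi-\eta|^{\alpha}<\infty$, i.e.\ precisely when there is a sequence $\eta_{m}\to\xi$ with $\eta_{m}\neq\xi$ along which these quotients stay bounded by some constant $C$. I would prove both implications against this reformulation. For $(\Leftarrow)$ I would simply feed the given $i$-block of length $k_{m}$ at the $n_{m}$-th level into Proposition~\ref{Prop1}: this produces $\eta_{m}\in\mathcal L$ with $|\xi-\eta_{m}|\asymp\e^{S_{n_{m}}\phi(\xi)}\to0$ (since $\phi<0$ and $n_{m}\to\infty$, so $\eta_{m}\to\xi$) and quotient $\asymp\e^{S_{n_{m}}\chi_{\alpha}(\xi)}\e^{k_{m}\psi(\underline{i})}\ll1$, whence the $\liminf$ is finite.

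For the implication $(\Rightarrow)$ I would start from a bounded approximating sequence $(\eta_{m})$ as above and, passing to a subsequence, assume all $\eta_{m}$ lie on the same side of $\xi$; I treat $\eta_{m}>\xi$, the case $\eta_{m}<\xi$ being symmetric upon exchanging the utter right letter $1$ for the utter left letter $0$ and taking $i=0$. Writing $\xi=(x_{1}x_{2}\ldots)$, let $N_{m}$ be the length of the longest common prefix of $\xi$ and $\eta_{m}$; since $\eta_{m}\to\xi$ we have $N_{m}\to\infty$. Put $n_{m}:=N_{m}+1$. As $\eta_{m}$ lies strictly to the right of $\xi$ while the two agree up to position $N_{m}$ and $1$ codes the utter right interval, the letter $x_{n_{m}}$ cannot equal $1$; let $k_{m}\ge0$ be the length of the maximal run of $1$'s beginning at position $n_{m}+1$, so $x_{n_{m}+1}=\dots=x_{n_{m}+k_{m}}=1$ and $x_{n_{m}+k_{m}+1}\neq1$. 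When $k_{m}\ge1$ this says exactly that $\xi$ has a $1$-block of length $k_{m}$ at the $n_{m}$-th level.

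The crux is a lower bound for the quotient of the \emph{given} $\eta_{m}$. Since $\eta_{m}$ lies to the right of the whole cylinder $[x_{1}\ldots x_{n_{m}}]$, the interval $(\xi,\eta_{m}]$ contains $(\xi,\hat\eta_{m}]$ with $\hat\eta_{m}:=(x_{1}\ldots x_{n_{m}}\underline{1})$ the right endpoint of $[x_{1}\ldots x_{n_{m}}]$. As in the proof of Proposition~\ref{Prop1}, the rightmost subcylinder $[x_{1}\ldots x_{n_{m}}1^{k_{m}+1}]$ of $[x_{1}\ldots x_{n_{m}}1^{k_{m}}]$ lies strictly to the right of $\xi$ (because $x_{n_{m}+k_{m}+1}\neq1$) and inside $(\xi,\hat\eta_{m}]$, so $|F_{\psi}(\xi)-F_{\psi}(\eta_{m})|\ge\nu_{\psi}([x_{1}\ldots x_{n_{m}}1^{k_{m}+1}])\gg\e^{S_{n_{m}}\psi(\xi)}\e^{k_{m}\psi(\underline{1})}$. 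By bounded distortion $|\xi-\eta_{m}|\le\diam([x_{1}\ldots x_{N_{m}}])\asymp\e^{S_{n_{m}}\phi(\xi)}$, and dividing gives $C\gg\e^{S_{n_{m}}\chi_{\alpha}(\xi)}\e^{k_{m}\psi(\underline{1})}$, that is $\e^{S_{n_{m}}\chi_{\alpha}(\xi)}\e^{k_{m}\psi(\underline{1})}\ll1$. Since $\chi_{\alpha}$ is continuous and strictly positive on the compact set $\mathcal L$ we have $\inf\chi_{\alpha}>0$, so $S_{n_{m}}\chi_{\alpha}(\xi)\to\infty$; as $\psi(\underline{1})<0$ this forces $k_{m}\to\infty$. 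In particular $k_{m}\ge1$ for all large $m$ (so the blocks genuinely exist) and, discarding finitely many terms and passing to a further subsequence, both $(n_{m})$ and $(k_{m})$ can be taken strictly increasing, which is the asserted conclusion with $i=1$.

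The main obstacle is this lower bound in $(\Rightarrow)$: one must recognize that boundedness of the quotients along $(\eta_{m})$ is forced by $\xi$ sitting deep inside a run of the extreme letter on the side of approximation, pin down the correct level $n_{m}=N_{m}+1$ and the maximal run length $k_{m}$, and bound $\nu_{\psi}((\xi,\eta_{m}])$ from below using only the contribution of $[x_{1}\ldots x_{n_{m}}]$ itself, namely one full extra cylinder $[x_{1}\ldots x_{n_{m}}1^{k_{m}+1}]$. The complementary, softer point is the passage $k_{m}\to\infty$ from the positivity of $\chi_{\alpha}$, without which the two sequences could not be made strictly increasing.
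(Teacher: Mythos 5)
Your proof is correct and rests on exactly the same estimates as the paper's: the backward implication is, as in the paper, an immediate combination of Lemma~\ref{lem0} and Proposition~\ref{Prop1}, and for the forward implication your lower bound $\nu_{\psi}((\xi,\eta_{m}])\geq\nu_{\psi}([x_{1}\ldots x_{n_{m}}\underline{1}_{k_{m}+1}])\gg\e^{S_{n_{m}}\psi(\xi)+k_{m}\psi(\underline{1})}$ against $|\xi-\eta_{m}|^{\alpha}\ll\diam([x_{1}\ldots x_{n_{m}-1}])^{\alpha}$ is precisely the paper's Case~2, while your $k_{m}=0$ situation is its Case~1. The only real difference is organizational: the paper proves the forward direction by contraposition (assuming every block sequence has $\liminf\e^{S_{n_{m}}\chi_{\alpha}+k_{m}\psi(\underline{i})}=\infty$ and showing every approximating sequence then has unbounded quotients), whereas you run the argument forwards and extract the blocks from a single bounded approximating sequence; this costs you the extra, correctly supplied, observation that $S_{n_{m}}\chi_{\alpha}(\xi)\to\infty$ forces $k_{m}\to\infty$, which is exactly what makes the blocks nondegenerate and lets you pass to strictly increasing sequences. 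One small omission: before speaking of ``the longest common prefix of $\xi$ and $\eta_{m}$'' you must reduce to $\eta_{m}\in\mathcal{L}$; as in the paper, if $\eta_{m}\notin\mathcal{L}$ one moves it away from $\xi$ to the first point $\eta_{m}'$ of $\mathcal{L}$ (which exists for $\eta_{m}$ close to $\xi$ since $\xi\in\mathcal{L}^{*}$), leaving the numerator unchanged and enlarging the denominator, so the quotients along $(\eta_{m}')$ remain bounded. With that one line added, your argument is complete.
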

\begin{proof}
The `if-part' follows immediately from combining Lemma \ref{lem0}
and Proposition \ref{Prop1}. For the `only-if-part', assume by way
of contradiction that $\xi=(x_{1}x_{2}\ldots)\in\mathcal{L}^{*}$
is given such that if $\xi$ has a strictly increasing sequences of
$i$-blocks, say of length $k_{m}$ at the $n_{m}$-th level, for
some $i\in\{0,1\}$, then \[
\liminf_{m\to\infty}e^{S_{n_{m}}\chi_{\alpha}(\xi)+k_{m}\psi(\underline{i})}=\infty.\]
 Let $(\xi_{m})_{m\in\N}$ be any sequence in $X\setminus\{\xi\}$
such that $\lim_{m\to\infty}\xi_{m}=\xi$. The aim is to show that
under these assumptions we necessarily have \[
\lim_{m\to\infty}\frac{|F_{\psi}(\xi)-F_{\psi}(\xi_{m})|}{|\xi-\xi_{m}|}=\infty.\]
 For this, first note that we can assume without loss of generality
that $\xi_{m}\in\mathcal{L}$. Indeed, if $\xi_{m}\notin\mathcal{L}$,
then move $\xi_{m}$ away from $\xi$ until one first hits $\mathcal{L}$,
say at the point $\xi_{m}'\in\mathcal{L}$ (note that by choosing
$\xi_{m}$ sufficiently close to $\xi$, we can assume without loss
of generality that such a $\xi_{m}'$ always exists, since $\xi\in\mathcal{L}^{*}$).
Clearly, we then have $|F_{\psi}(\xi)-F_{\psi}(\xi_{m})|=|F_{\psi}(\xi)-F_{\psi}(\xi_{m}')|$
as well as $|\xi-\xi_{m}|\leq|\xi-\xi_{m}'|$, and hence, \[
\frac{|F_{\psi}(\xi)-F_{\psi}(\xi_{m})|}{|\xi-\xi_{m}|^{\alpha}}\geq\frac{|F_{\psi}(\xi)-F_{\psi}(\xi_{m}')|}{|\xi-\xi_{m}'|^{\alpha}}.\]
 For ease of exposition, let us now only consider the case in which
$\xi_{m}>\xi$ for all $m\in\N$. The case $\xi_{m}<\xi$ can be dealt
with in the same way, and this is left to the reader. Now, let $(n_{m})_{m\in\N}$
be the sequence such that $\xi_{m}\in[x_{1}\ldots x_{n_{m}-1}]$ and
$\xi_{m}\notin[x_{1}\ldots x_{n_{m}-1}x_{n_{m}}]$. Then there are
two cases to consider. The first case is that $x_{n_{m}+1}\neq1$,
and the second case is that $\xi$ has a 1-block of length $k_{m}$
at the $n_{m}$-th level. For these two cases one argues as follows. 
\begin{description}
\item [{{{Case~1}}}] Here we have that $\left[x_{1}\ldots x_{n_{m}}1\right]$
separates the points $\xi$ and $\xi_{m}$, and hence, \[
\frac{|F_{\psi}(\xi)-F_{\psi}(\xi_{m})|}{|\xi-\xi_{m}|^{\alpha}}\geq\frac{\nu_{\psi}\left(\left[x_{1}\ldots x_{n_{m}}1\right]\right)}{\diam\left(\left[x_{1}\ldots x_{n_{m}-1}\right]\right)^{\alpha}}\gg e^{S_{n_{m}}\chi_{\alpha}\left(\xi\right)}.\]

\item [{{{Case~2}}}] Here we have that $\left[x_{1}\ldots x_{n_{m}}\underline{1}_{k_{m}+1}\right]$
separates the points $\xi$ and $\xi_{m}$, where $\underline{1}_{k}$
refers to the word of length $k$ containing exclusively the letter
$1$. In this situation we obtain\[
\frac{|F_{\psi}(\xi)-F_{\psi}(\xi_{m})|}{|\xi-\xi_{m}|^{\alpha}}\geq\frac{\nu_{\psi}\left(\left[x_{1}\ldots x_{n_{m}}\underline{1}_{k_{m}+1}\right]\right)}{\diam\left(\left[x_{1}\ldots x_{n_{m}-1}\right]\right)^{\alpha}}\gg e^{S_{n_{m}}\chi_{\alpha}\left(\xi\right)+k_{m}\psi\left(\underline{1}\right)}.\]

\end{description}
In both cases we have that the right hand side is unbounded. This
proves that the right $\alpha$-Hölder derivative of $F_{\psi}$ at
$\xi$ does exist in the generalized sense. By proceeding similarly
for the left $\alpha$-Hölder derivative of $F_{\psi}$ (where one
essentially has to take the `mirror image' of the above argument and
to replace $1$ by $0$), the statement of the proposition follows. 
\end{proof}

\section{Proof of the Main Theorem }

In this section we give the proof of the Main Theorem. We split up
the proof by first giving the proof for the upper bound of the Hausdorff
dimension of $\Lambda_{\psi}^{\alpha}$, and this is then followed
by the proof of the lower bound.

\noindent Throughout, let us fix for each $n\in\N$ a partition $\mathcal{C}_{n}$
of $\mathcal{L}$ by cylinder sets such that the following holds.
\[
\hbox{For\, each}\,\,[\omega]\in\mathcal{C}_{n}\,\,\hbox{and}\,\,\xi\in[\omega],\,\,\hbox{we\, have}\,\,|S_{|\omega|}\chi_{\alpha}(\xi)-n|\ll1,\]
 where $|\omega|$ refers to the word length of $\omega$. In the
following we also consider the `stopping time' $T_{t}:\mathcal{L}\to\R$,
which is defined by \[
T_{t}(\xi):=\sup\{k\in\mathbb{N}:S_{k}\chi_{\alpha}(\xi)<t\},\;\mbox{ for all }\; t>0,\xi\in\mathcal{L}.\]
 Moreover, for $i\in\{0,1\}$ and $\epsilon>0$ we define \[
\mathcal{C}_{n}^{(i)}\left(\epsilon\right):=\left\{ [\omega\underline{i}_{n_{\epsilon}}]:[\omega]\in\mathcal{C}_{n}\right\} ,\]
 where $\underline{i}_{n_{\epsilon}}$ refers to the word of length
$n_{\epsilon}:=\left\lfloor -n\left(1-\epsilon\right)/\psi(\underline{i})\right\rfloor $
containing exclusively the letter $i$.

\noindent Finally, for each $i\in\{0,1\}$ let $s_{i}$ be the unique
solution of the equation \begin{equation}
s_{i}\phi(\underline{i})/\psi(\underline{i})+\beta_{\alpha}(s_{i})=0.\label{main2}\end{equation}
 Throughout, let us always assume without loss of generality that
$\max\{s_{0},s_{1}\}=s_{0}$. For the proof of our Main Theorem it
is then sufficient to show that \begin{equation}
\dim_{H}(\Lambda_{\psi}^{\alpha})=s_{0}.\label{main1}\end{equation}
 Indeed, for this note that (\ref{main2}) has the following interpretation.
Namely, $s_{i}$ is equal to the $x$-coordinate of the point of intersection
of the graph of $\beta_{\alpha}$ with the straight line $L_{i}$
through the origin of slope $-\phi(\underline{i})/\psi(\underline{i})$.
Since $\beta_{\alpha}$ is increasing and $\beta_{\alpha}(t)<0$ for
$t<\delta$, the assumption $s_{0}=\max\{s_{0},s_{1}\}$ gives that
the slope of $L_{1}$ has to be less than or equal to the slope of
$L_{0}$. Hence, since $\phi\left(\underline{i}\right)/\psi\left(\underline{i}\right)>0$,
it follows that the minimum of $\phi(\underline{i})/\psi(\underline{i})$
is attained for $i=0$. Combining this observation with (\ref{main2})
and assuming that (\ref{main1}) holds, the implicit characterization
of $\dim_{H}(\Lambda_{\psi}^{\alpha})$ in (\ref{eq:linRel}) follows.

\noindent Therefore, we are now left with proving the statement in
(\ref{main1}), which will be done in the following two remaining
sections.

\subsection{The upper bound}

In this section we give the proof for the upper bound of the Hausdorff
dimension of $\Lambda_{\psi}^{\alpha}$ as stated in the Main Theorem.
In a nutshell, the idea is to show that there is a suitable covering
of $\Lambda_{\psi}^{\alpha}$ which allows to apply the Borel-Cantelli
Lemma in order to derive the desired upper bound.

Recall that $s_{0}=\max\{s_{0},s_{1}\}$, and let $\epsilon>0$ be
given. For $\kappa>0$ such that $(1-\epsilon)(s_{0}+\kappa)=s_{0}+\tau$,
for some $\tau>0$ (note that by choosing $\epsilon$ sufficiently
small, $\kappa$ can be made arbitrary small), we then have\\
 \\
 ${\displaystyle \sum_{n\in\N}\sum_{C\in\mathcal{C}_{n}^{(0)}(\epsilon)}\left(\diam(C)\right)^{s_{0}+\kappa}\asymp\sum_{n\in\N}\sum_{C\in\mathcal{C}_{n}^{(0)}(\epsilon)}\e^{\sup_{\xi\in C}(s_{0}+\kappa)S_{T_{n}(\xi)+n_{\epsilon}}\phi(\xi)}}$
\begin{eqnarray*}
 & \asymp & \sum_{n\in\N}\sum_{C\in\mathcal{C}_{n}^{(0)}(\epsilon)}\e^{\sup_{\xi\in C}(s_{0}+\kappa)S_{T_{n}(\xi)+n_{\epsilon}}\phi(\xi)}\\
 & \ll & \sum_{n\in\N}\e^{-n(1-\epsilon)(s_{0}+\kappa)\phi(\underline{0})/\psi(\underline{0})}\sum_{C\in\mathcal{C}_{n}^{(0)}}\e^{s_{0}\sup_{\xi\in C}S_{T_{n}(\xi)}\phi(\xi)}\\
 & \asymp & \sum_{n\in\N}\e^{-n(1-\epsilon)(s_{0}+\kappa)\phi(\underline{0})/\psi(\underline{0})-n\beta_{\alpha}(s_{0})}\sum_{C\in\mathcal{C}_{n}}\e^{\sup_{\xi\in C}S_{T_{n}(\xi)}\left(s_{0}\phi(\xi)+\beta_{\alpha}(s_{0})\chi_{\alpha}(\xi)\right)}\\
 & \asymp & \sum_{n\in\N}\left(\e^{-\tau\phi(\underline{0})/\psi(\underline{0})}\right)^{n}<\infty.\end{eqnarray*}
 Here, we have used the Gibbs property $\sum_{C\in\mathcal{C}_{n}}\exp(\sup_{\xi\in C}S_{T_{n}(\xi)}\left(s_{0}\phi(\xi)+\beta_{\alpha}(s_{0})\chi_{\alpha}(\xi)\right))\asymp1$
of the Gibbs measure $\mu_{s_{0}}$. Similar, one finds (using the
fact that $(1-\epsilon)(s_{0}+\kappa)\geq s_{1}+\tau$), \\
 \\
 ${\displaystyle \sum_{n\in\N}\sum_{C\in\mathcal{C}_{n}^{(1)}(\epsilon)}\left(\diam(C)\right)^{s_{0}+\kappa}\ll\sum_{n\in\N}\e^{-n(1-\epsilon)(s_{0}+\kappa)\phi(\underline{1})/\psi(\underline{1})}\sum_{C\in\mathcal{C}_{n}}\e^{s_{0}\sup_{\xi\in C}S_{T_{n}(\xi)}\phi(\xi)}}$
\begin{eqnarray*}
 & \ll & \sum_{n\in\N}\e^{-n(1-\epsilon)(s_{0}+\kappa)\phi(\underline{1})/\psi(\underline{1})-n\beta_{\alpha}(s_{1})}\sum_{C\in\mathcal{C}_{n}}\e^{\sup_{\xi\in C}S_{T_{n}(\xi)}\left(s_{1}\phi(\xi)+\beta_{\alpha}(s_{1})\chi_{\alpha}(\xi)\right)}\\
 & \ll & \sum_{n\in\N}\e^{-n(s_{1}+\tau)\phi(\underline{1})/\psi(\underline{1})-n\beta_{\alpha}(s_{1})}<\infty.\end{eqnarray*}
 Here, we have used the Gibbs property $\sum_{C\in\mathcal{C}_{n}}\exp(\sup_{\xi\in C}S_{T_{n}(\xi)}\left(s_{1}\phi(\xi)+\beta_{\alpha}(s_{1})\chi_{\alpha}(\xi)\right))\asymp1$
of the Gibbs measure $\mu_{s_{1}}$. Therefore, we now have \[
\dim_{H}(\{\xi\in X:\xi\in\bigcup_{i=0,1}\mathcal{C}_{n}^{(i)}(\epsilon)\,\,\hbox{for\, infinitely\, many}\,\, n\in\N\})\leq s_{0}+\kappa.\]
 Hence, it remains to show that for all $\epsilon>0$, \[
\Lambda_{\psi}^{\alpha}\subset\bigcup_{i=0,1}\bigcup_{n\in\N}\mathcal{C}_{n}^{(i)}\left(\epsilon\right).\]
 For this, let $\xi\in\Lambda_{\psi}^{\alpha}$ be given. By Proposition
\ref{Prop3}, there exists $i\in\left\{ 0,1\right\} $ and strictly
increasing sequences $(n_{m})_{m\in\N}$ and $(k_{m})_{m\in\N}$ of
positive integers, such that $\xi$ has an $i$-block of length $k_{m}$
at the $n_{m}$-th level, for each $m\in\N$, and \[
e^{S_{n_{m}}\chi_{\alpha}(\xi)+k_{m}\psi(\underline{i})}\ll1,\,\,\hbox{for\, each}\,\, m\in\N.\]
 By setting $\ell(n_{m}):=\lfloor S_{n_{m}}\chi_{\alpha}(\xi)\rfloor$,
it follows $\exp(k_{m})\gg\exp(-\ell(n_{m})/\psi(\underline{i}))$.
Hence, for each $\epsilon>0$ and for each $m$ sufficiently large,
we have $k_{m}\geq-\ell(n_{m})(1-\epsilon)/\psi(\underline{i}))$.
It follows that $\xi\in\mathcal{C}_{n_{m}}^{(i)}(\epsilon)$, which
finishes the proof of the upper bound.

\hspace{2mm}

\subsection{The lower bound}

In this section we give the proof for the lower bound of the Hausdorff
dimension of $\Lambda_{\psi}^{\alpha}$ as stated in the Main Theorem.
For this, we use the usual strategy and define a probability measure
$m$ supported on a certain Cantor-like set $\mathcal{M}$ contained
in $\Lambda_{\psi}$. We then show that the Mass Distribution Principle
is applicable to $(\mathcal{M},m)$, and this will then eventually
lead to the desired estimate from below.

\noindent For the construction of $\mathcal{M}$, let $\left(n_{k}\right)_{k\in\N}$
denote a rapidly increasing sequence (to be specified later) of positive
integers. With this sequence at hand, consider the sequences $\left(m_{k}\right)_{k\in\N}$
and $\left(N_{k}\right)_{k\in\N}$ which are given inductively as
follows. \\
 \\
 ${\displaystyle \,\,\,\,\, N_{1}:=n_{1}\,\,\hbox{and}\,\, N_{k}:=\left\lfloor \sum_{j=1}^{k}n_{j}+\chi_{\alpha}(\underline{0})\sum_{j=1}^{k-1}m_{j}\right\rfloor \,\,\,\hbox{for\, all}\,\,\, k\geq2,\,\,\,}$
\[
\hbox{where}\,\,\, m_{j}:=\lfloor-N_{j}/\psi(\underline{0})\rfloor\,\,\,\hbox{for\, all}\,\,\, j\in\N.\]
 Now, consider the Cantor-like set $\mathcal{M}$ given by \[
\mathcal{M}:=\{(x_{1}x_{2}\ldots)\in\mathcal{L}:(x_{1}x_{2}\ldots)=\omega_{1}\underline{0}_{m_{1}}\omega_{2}\underline{0}_{m_{2}}\ldots,\,\,\hbox{with}\,\,\omega_{j}\in\mathcal{C}_{n_{j}}\,\,\hbox{for\, all}\,\, j\in\N\},\]
 where $\underline{0}_{m_{j}}$ refers to the word consisting of $m_{j}$
times the letter $0$. In order to see that $\mathcal{M}\subset\Lambda_{\psi}^{\alpha}$,
let $\xi=(\omega_{1}\underline{0}_{m_{1}}\omega_{2}\underline{0}_{m_{2}}\ldots)\in\mathcal{M}$
be given. With $\ell_{k}$ referring to the word length of $\omega_{1}\underline{0}_{m_{1}}\ldots\omega_{k}$,
we then have by construction $\exp(S_{\ell_{k}}\chi_{\alpha}(\xi))\asymp\exp(N_{k})$.
Therefore, we have that $\xi$ has a $0$-block of length $m_{k}$
at the $\ell_{k}$-th level, for each $k\in\N$, and \[
\e^{S_{\ell_{k}}\chi_{\alpha}(\xi)+m_{k}\psi(\underline{0})}\asymp\e^{N_{k}+\lfloor-N_{k}/\psi(\underline{0})\rfloor\psi(\underline{0})}\ll1.\]
 An application of Lemma \ref{Prop3} then gives $\xi\in\Lambda_{\psi}^{\alpha}$,
showing that $\mathcal{M}\subset\Lambda_{\psi}^{\alpha}$.

\noindent The next step is to define a measure $m$ on $X$ as follows.

\begin{description}
\item [{{{(C1)}}}] For cylinder sets of the form $[\omega_{1}\underline{0}_{m_{1}}\ldots\omega_{u-1}\underline{0}_{m_{u-1}}\omega_{u}\underline{0}_{k}]$
with $k<m_{u}$ and $\omega_{j}\in\mathcal{C}_{n_{j}}$ for each $j\in\{1,\ldots,u\}$,
put \[
m([\omega_{1}\underline{0}_{m_{1}}\ldots\omega_{u-1}\underline{0}_{m_{u-1}}\omega_{u}\underline{0}_{k}]):=\prod_{j=1}^{u}\mu_{s_{0}}([\omega_{j}]).\]

\item [{{{(C2)}}}] For cylinder sets of the form $[\omega_{1}\underline{0}_{m_{1}}\ldots\omega_{u}\underline{0}_{m_{u}}x_{1}\ldots x_{l}]$
with $\left[x_{1}\ldots x_{l}\right]$ containing some cylinder set
from $\mathcal{C}_{m_{u+1}}$, as well as $\omega_{j}\in\mathcal{C}_{n_{j}}$
for all $j\in\{1,\ldots,u\}$, put \[
m([\omega_{1}\underline{0}_{m_{1}}\ldots\omega_{u}\underline{0}_{m_{u}}x_{1}\ldots x_{l}]):=\mu_{s_{0}}([x_{1}\ldots x_{l})\cdot\prod_{j=1}^{u}\mu_{s_{0}}([\omega_{j}]).\]

\end{description}
By Kolmogorov's Consistency Theorem this defines a measure which strictly
speaking is first only defined on $A^{\N}$. However, we then identify
this measure with the push-down to $\mathcal{L}$ via the coding map
$\Phi$. One immediately verifies that for the so obtained measure
$m$ on $\mathcal{L}$ we have $m\left(\mathcal{M}\right)=1$. In
order to complete the proof for the lower bound, it is now sufficient
to show that $m$ satisfies the `Frostman condition' for cylinder
sets of the type in (C1) as well as for cylinder sets of the type
in (C2). For this, first note that for $\omega\in\mathcal{C}_{n_{1}},k\leq m_{1}$,
and with $\xi$ referring to some arbitrary element in $[\omega\underline{0}_{m_{1}}]$,
we have the following estimate. \begin{eqnarray*}
m([\omega\underline{0}_{k}]) & = & \mu_{s_{0}}([\omega])\asymp\e^{s_{0}S_{T_{n_{1}}(\xi)}\phi(\xi)+n_{1}\beta_{\alpha}(s_{0})}\asymp\left(\e^{S_{T_{n_{1}}(\xi)}\phi(\xi)-n_{1}\phi(\underline{0})/\psi(\underline{0})}\right)^{s_{0}}\\
 & \asymp & \left(\e^{S_{T_{n_{1}}(\xi)}\phi(\xi)-n_{1}\phi(\underline{0})/\psi(\underline{0})}\right)^{s_{0}}\asymp\left(\e^{S_{T_{n_{1}}(\xi)+\lfloor-n_{1}/\psi(\underline{0})\rfloor}\phi(\xi)}\right)^{s_{0}}\\
 & \asymp & (\diam([\omega\underline{0}_{m_{1}}]))^{s_{0}}\leq(\diam([\omega\underline{0}_{k}]))^{s_{0}},\end{eqnarray*}
 where we have used the fact $\beta_{\alpha}(s_{0})=-s_{0}\phi(\underline{0})/\psi(\underline{0})$.
Using the latter estimate, we can now check the Frostman condition
for each of the two types of cylinder sets (C1) and (C2) separately
as follows.

\vspace{3mm}

\noindent {\em ad (C1)}: \, For cylinder sets as in (C1), we have
for each $\epsilon>0$ and with $c>0$ referring to some constant
(which takes care of the comparability constant in the above estimate
for $m([\omega\underline{0}_{k}])$ and of the fact that $\diam([ab])\ll\diam([a])\cdot\diam([b])$,
for all $a,b\in A$), \\
 \\
 ${\displaystyle m([\omega_{1}\underline{0}_{m_{1}}\ldots\omega_{u-1}\underline{0}_{m_{u-1}}\omega_{u}\underline{0}_{k}])=\prod_{j=1}^{u}\mu_{s_{0}}([\omega_{j}])}$
\begin{eqnarray*}
 & \leq & c^{u}\left(\diam([\omega_{1}\underline{0}_{\lfloor-n_{1}/\psi(\underline{0})\rfloor}])\ldots\diam([\omega_{u}\underline{0}_{\lfloor-n_{u}/\psi(\underline{0})\rfloor}])\right)^{s_{0}}\\
 & \ll & \frac{c^{u}\cdot\left(\diam([\underline{0}_{m_{u}}])\right)^{\epsilon}}{\left(\diam([\underline{0}_{m_{1}+\lfloor-m_{1}/\psi(\underline{0})\rfloor}])\ldots\diam([\underline{0}_{m_{u-1}+\lfloor-m_{u-1}/\psi(\underline{0})\rfloor}])\right)^{s_{0}}}\\
 &  & \hspace{42mm}\cdot\left(\diam([\omega_{1}\underline{0}_{m_{1}}\ldots\omega_{u}\underline{0}_{m_{u}}])\right)^{s_{0}-\epsilon},\end{eqnarray*}
 where we have used the fact

\noindent \begin{eqnarray*}
m_{j}-\lfloor-n_{j}/\psi(\underline{0})\rfloor & = & \left\lfloor -\left(\sum_{r=1}^{j}n_{r}+\chi_{\alpha}(\underline{0})\sum_{r=1}^{j-1}m_{r}\right)/\psi(\underline{0})\right\rfloor -\lfloor-n_{j}/\psi(\underline{0})\rfloor\\
 & = & m_{j-1}+\lfloor-m_{j-1}\chi_{\alpha}(\underline{0})/\psi(\underline{0})\rfloor\pm1.\end{eqnarray*}
 Now the annouced growth condition for the sequence $\left(n_{k}\right)_{n\in\N}$
comes into play. Namely, by choosing this sequence to increase sufficiently
fast, one immediately verifies that the first factor in the above
estimate is uniformly bounded from above. Indeed, we have \\
 \\
 ${\displaystyle \frac{c^{u}\cdot\left(\diam([\underline{0}_{m_{u}}])\right)^{\epsilon}}{\left(\diam([\underline{0}_{m_{1}+\lfloor-m_{1}/\psi(\underline{0})\rfloor}])\ldots\diam([\underline{0}_{m_{u-1}+\lfloor-m_{u-1}/\psi(\underline{0})\rfloor}])\right)^{s_{0}}}}$
\begin{eqnarray*}
 & \ll & c^{u}\e^{\epsilon\phi(\underline{0})m_{u}}\e^{-s_{0}\phi(\underline{0})\sum_{r=1}^{u-1}m_{r}(1-1/\psi(\underline{0}))}\\
 & = & \exp\left(\phi(\underline{0})m_{u}\left(\epsilon-\Big(\left(1-1/\psi(\underline{0})\right)s_{0}/m_{u}\sum_{r=1}^{u-1}m_{r}-u\log c/(\phi(\underline{0})m_{u})\Big)\right)\right).\end{eqnarray*}
 Hence, by choosing $\left(n_{k}\right)_{n\in\N}$ appropriately,
we can make sure that \[
\left(1-1/\psi(\underline{0})\right)\frac{s_{0}}{m_{u}}\sum_{r=1}^{u-1}m_{r}-\frac{u\log c}{\phi(\underline{0})m_{u}}<\epsilon,\]
 and thus, \[
\frac{c^{u}\cdot\left(\diam([\underline{0}_{m_{u}}])\right)^{\epsilon}}{\left(\diam([\underline{0}_{m_{1}+\lfloor-m_{1}/\psi(\underline{0})\rfloor}])\ldots\diam([\underline{0}_{m_{u-1}+\lfloor-m_{u-1}/\psi(\underline{0})\rfloor}])\right)^{s_{0}}}\ll1.\]
 Therefore, we have now shown that for each $\epsilon>0$ and for
all $u\in\N$ sufficiently large, \begin{eqnarray*}
m([\omega_{1}\underline{0}_{m_{1}}\ldots\omega_{u-1}\underline{0}_{m_{u-1}}\omega_{u}\underline{0}_{k}]) & \ll & \left(\diam([\omega_{1}\underline{0}_{m_{1}}\ldots\omega_{u}\underline{0}_{m_{u}}])\right)^{s_{0}-\epsilon}\\
 & \leq & \left(\diam([\omega_{1}\underline{0}_{m_{1}}\ldots\omega_{u}\underline{0}_{k}])\right)^{s_{0}-\epsilon}.\end{eqnarray*}
 This finishes the proof of the `Frostman condition' for cylinder
sets of type (C1).

\vspace{3mm}

\noindent {\em ad (C2)}: \, For cylinder sets as in (C2), first
note that since $\beta_{\alpha}(s_{0})<0$ and $\chi_{\alpha}>0$,
we have for all $\xi\in[x_{1}\ldots x_{l}]$, \[
\mu_{s_{0}}([x_{1}\ldots x_{l}])\asymp\e^{s_{0}S_{l}\phi(\xi)+\beta_{\alpha}(s_{0})S_{l}\chi_{\alpha}(\xi)}\leq\e^{s_{0}S_{l}\phi(\xi)}\ll\left(\diam([x_{1}\ldots x_{l}])\right)^{s_{0}}.\]
 Combining this with the estimate in {\em `ad $(C1)'$} we obtain
for each $\epsilon>0$ and for all $u\in\N$ sufficiently large, \begin{eqnarray*}
m([\omega_{1}\underline{0}_{m_{1}}\ldots\omega_{u}\underline{0}_{m_{u}}x_{1}\ldots x_{l}]) & = & \mu_{s_{0}}([x_{1}\ldots x_{l}])\cdot\prod_{j=1}^{u}\mu_{s_{0}}([\omega_{j}])\\
 & \ll & \left(\diam([x_{1}\ldots x_{l}])\right)^{s_{0}}\cdot\left(\diam([\omega_{1}\underline{0}_{m_{1}}\ldots\omega_{u}\underline{0}_{m_{u}}])\right)^{s_{0}-\epsilon}\\
 & \leq & \left(\diam([\omega_{1}\underline{0}_{m_{1}}\ldots\omega_{u}\underline{0}_{m_{u}}x_{1}\ldots x_{l}])\right)^{s_{0}-\epsilon}.\end{eqnarray*}

\vspace{3mm}

\noindent By combining the results of {\em `ad (C1)'} and {\em
`ad (C2)'} and using the fact that $\mathcal{M}$ is a subset of
$\Lambda_{\psi}^{\alpha}$, we have now shown that for each $\epsilon>0$,
\[
\dim_{H}(\Lambda_{\psi}^{\alpha})\geq\dim_{H}(\mathcal{M})\geq s_{0}-\epsilon.\]
 This finishes the proof of the lower bound, and hence also the proof
of the Main Theorem.

\end{document}